\newtheorem{theorem}{Theorem}
\newtheorem{proposition}{Proposition}
\newtheorem{lemma}{Lemma}
\newtheorem{definition}{Definition}
\newtheorem{corollary}[theorem]{Corollary}
\newtheorem{remark}{Remark}
\newtheorem{hypothesis}{Hypothesis}
\newenvironment{system}
{\left\lbrace\begin{array}{@{}l@{}}}
{\end{array}\right.}
\DeclarePairedDelimiter{\abs}{\lvert}{\rvert}
\DeclarePairedDelimiter{\norm}{\lVert}{\rVert}
\newcommand{\lsz}{\left\lbrace}
\newcommand{\psz}{\right\rbrace}
\newcommand{\lav}{\left|}                       
\newcommand{\rav}{\right|}                      
\newcommand{\lbr}{\left[}                       
\newcommand{\rbr}{\right]}                      
\newcommand{\lp}{\left(}                        
\newcommand{\rp}{\right)}                       
\newcommand{\R}{\mathbb{R}}
\newcommand{\E}{\mathbb{E}}
\newcommand{\mP}{\mathbb{P}}
\newcommand{\Fcal}{\mathcal{F}}      
\newcommand{\Y}{\mathcal{Y}}
\definecolor{capri}{rgb}{0.0, 0.75, 1.0}
\title[Ergodic SMP]{Ergodic maximum principle for stochastic systems}
\subjclass{60H15, 93E20}
 \keywords{Stochastic maximum principle, stochastic ergodic control problems, dissipative systems, backward stochastic differential equation.}
\begin{document}

\author{Carlo Orrieri}
\address[C. Orrieri]{Dipartimento di Matematica, Sapienza Universit\`a di Roma. Piazzale Aldo Moro 5, 00185 Roma, Italia}
\email{orrieri@mat.uniroma1.it}

\author{Gianmario Tessitore}
\address[G. Tessitore]{Dipartimento di Matematica e Applicazioni, Universit\`a di Milano-Bicocca. via Cozzi 55, 20125 Milano, Italia}
\email{gianmario.tessitore@unimib.it}

\author{Petr Veverka}
\address[P. Veverka]{Institute of Information Theory and Automation, Czech Academy of Sciences, Pod Vod\'arenskou v\v{e}\v{z}\' i 4, Praha 8, 182 08, Czech Republic}
\email{panveverka@seznam.cz}

\begin{abstract}
We present a version of the stochastic maximum principle  (SMP) for ergodic control problems. In particular we give necessary (and sufficient) conditions for optimality for controlled dissipative systems in finite dimensions. 
The strategy we employ is mainly built on duality techniques. We are able to construct a dual process for all positive times via the analysis of a suitable class of perturbed linearized forward equations. We show that such a process is the unique bounded solution to a Backward SDE on infinite horizon from which we can write a version of the SMP.
\end{abstract}

\maketitle

\section{Introduction}

\noindent We consider an optimal control problem with the following controlled \textit{dissipative stochastic state equation}
\begin{equation}
\begin{system}
dX_t = b(X_t,u_t)dt + \sigma(X_t,u_t)dW_t, \qquad t\geq 0,\\
X_0 = x,
\end{system}
\end{equation}
and an \textit{ergodic cost functional} (e.g. a functional that  depends only on the asymptotic behaviour of the state and of the control) such as:
\begin{align}
J^{\inf} (u(\cdot)) &= \liminf_{T \to \infty} \frac{1}{T}\E\int_0^T f(X_t,u_t)dt, \label{cost inf}\\
J^{\sup} (u(\cdot)) &= \limsup_{T \to \infty} \frac{1}{T}\E\int_0^T f(X_t,u_t)dt. \label{cost sup}
\end{align}
In the above  the state $X$ is a $\R^n$-valued process and $(W_t)_{t\geq 0}$ is a $d$-dimensional Wiener process. Moreover the drift $b$ and diffusion $\sigma$ satisfy a joint monotonicity condition.
Finally the control process $(u_t)$ is progressively measurable and takes values in a non-empty convex subset $U\subset \R^l$.

\noindent We refer to this setting as \textit{ergodic control problem}.  The choice of the functionals refers to "minmin" and "minmax" formulation. Our aim is to find a correct formulation of the stochastic maximum principle (SMP) in the sense of Pontryagin, by means of which we have at our disposal some necessary (and sufficient) condition for optimality.

Alternatively, under stronger regularity assumptions, one can 
use the dynamic programming and derive the Hamilton-Jacobi-Bellman equation whose solution gives the optimal cost and the optimal feedback control. In finite dimensions, the first result was obtained in the paper by Mandl \cite{mandl1964control}, later generalized by Bokar and Gosh in  \cite{borkar1988ergodic}.  For further generalizations of such an HJB approach, both in finite and infinite dimensional framework both by analytic and by probabilistic tools see e.g. \cite{goldys1999ergodic, fuhrman2009ergodic, hu2015probabilistic, debussche2011ergodic, richou2009ergodic, bensoussan1987equations, arisawa1998ergodic}
We also refer to \cite{goldys2011stochastic} for a survey on recent results obtained in this direction.

Nevertheless, it is by now well known that, even if it only provides necessary ( only under strong convexity requirements also sufficient) optimality conditions, the SMP normally requires much less regularity and structural condition allowing for instance to easily include the case of control dependent diffusion. The first general formulation of the SMP for finite horizon controlled stochastic systems in finite dimensions was obtained by Peng in \cite{peng1990general}. After this seminal paper, many directions have been followed by many authors. For what concerns ergodic costs, though, the theory is not yet fully developed. As far as we know, the only version of necessary and sufficient condition for optimality goes back to the paper by Kushner \cite{kushner1978optimality} in 1978, in which no backward stochastic equation appeared.
In that framework the author adopted a martingale solution approach and considered only Markov feedback controls. The system is also assumed to be stable for each control. Under these assumptions, for each stationary Markov control there exists a unique invariant measure $\mu_u(\cdot)$ such that the initial cost functional can be rewritten in the following way
\begin{equation*}
\lim_{T \to \infty} \E_{x_0}^u \frac{1}{T} \int_0^T f(X_t,u(X_t)) dt = \int f(x,u(x)) \mu_u(dx).
\end{equation*}
Using this formulation, Kushner derived a necessary and sufficient condition for $u(\cdot)$ to be optimal, which he called a ``dynamic programming like'' condition.
Let us also mention a recent preprint \cite{cohen2015classical} in which the authors give some sufficient condition for optimality, studying the adjoint Backward SDE, as well as Feller property and exponential ergodicity of the controlled process. As in the present paper the adjoint BSDE is  multidimensional in an  infinite horizon.
The point is that the approach chosen in \cite{cohen2015classical} to prove well posedness of such an equation relies on Girsanov argument and seems to work  under commutativity requirements that are satisfied when $n=1$ or when $\sigma$ is constant. Also see \cite{guatteri2009ergodic} for infinite horizon multidimensional BSDEs in the context of linear quadratic stationary optimal control.
 
Our formulation is fairly general. We do not impose the existence of a limit in the formulation of the cost functional and we consider general progressive controls. Moreover, notice that the convexity assumption on the control actions is a natural choice for the ergodic control problems. Indeed, due to the dissipativity of the system, a spike variation argument is not sufficient to extract useful information on the behaviour of the system at infinity.  In the present paper we deduce a version of the maximum principle written in terms of the unique bounded solution to a multidimensional backward SDE on infinite horizon
\begin{equation}
-dp_t = \left[ D_xb(X_t,u_t)^*p_t + D_x\sigma(X_t,u_t)^*q_t - D_xf(X_t,u_t)\right]dt - q_tdW_t.
\end{equation}
As far as we know, a well-posedness result for backward equations  of this form is new. The major difficulty to overcome is the lack of integrability in time of the forcing term of the equation. Due to the hypothesis on the state equation we can guarantee that
\[\sup_{t \geq 0} \left( \E\abs{D_x f(X_t,u_t)}^r \right)^{1/r} < \infty;\qquad  \text{for some }r > 1. \]

Similar equations are studied in the formulation of the SMP for discounted cost functionals in infinite horizon, see e.g. \cite{maslowski2014sufficient}, \cite{orrieri2015necessary}. In that case, though, the spaces in which one is looking for a solution are weighted $L^2$-spaces, allowing the solution to explode at infinity in a controlled way. Here, due to the stability of the system, we expect the solution to be bounded up to infinity.    
 
The strategy we employ is mainly built on duality techniques. Via the analysis of a suitable class of perturbed linearized forward eqautions, see equation \eqref{eq:duality:general} below, we are able, exploiting their dissipativity,  to construct an adjoint process for all positive times. We introduce then a well-suited family of truncated equations and we show the consistency of the family with respect to the varying finite horizon $T>0$, as $T \to  \infty$.

We also propose a second version of maximum principle involving a
family of backward equations on finite time horizon $T$ with terminal condition $p^T_T = 0$  that could be verifiable in certain cases, see Remark \ref{thm:necessary truncated} below.

Once we have a necessary condition for optimality, it is natural to ask also for a sufficient counterpart of it. As in the classical setting, an extra convexity assumption on the Hamiltonian of the system guarantees the required sufficiency.   

The paper is structured as follows. In Section 2 we fix the notation and we discuss the main assumptions on the state equation and on the control actions. In Section 3 we study the convex perturbation of the optimal control and we expand the optimal trajectory and cost functional with respect to the perturbation. Section 4 is the core of the paper. Here we introduce the adjoint equation and we present a well-posedness result for it. The main results concerning the necessary and sufficient versions of the SMP are contained in Section 5 and 6.

\section{Preliminaries and assumptions}

Let $(\Omega, \Fcal, \mP)$ be a complete probability space and $(W_t)_{t\geq 0}$ a standard $d$-dimensional Brownian motion.
Throughout the paper we use the natural filtration $(\mathcal{F}_t)_{t\geq 0}$ associated to $W$, augmented in the usual way with the family of $\mP$-null sets of $\mathcal{F}$. By $|\cdot|$ we denote the Euclidean norm on $\R^n$ and $\norm{\cdot}_2$ denotes the Hilbert-Schmidt norm on $\R^{n \times n}$.

For any $p \geq 1$ and $T>0$ we define
\begin{itemize}
\item $L^p(\Omega\times [0,T]; \R^n)$, the set of all $(\mathcal{F}_t)$-progressive processes with values in $\R^n$ such that
\[ \norm{X}_{L^p(\Omega\times[0,T];\R^n)} = \left( \E \int_0^T \abs{X_t}^p dt\right)^{1/p} < \infty; \]
\item $L^p(\R_+; L^q(\Omega;\R^n))$ the set of all $(\mathcal{F}_t)$-progressive processes with values in $\R^n$ with $1\leq q < +\infty$ such that
\[ \norm{X}_{L^p(\R_+; L^q(\Omega;\R^n))}^p =  \int^{\infty}_0 \lp \E \abs{X_t}^q \rp^{\frac{p}{q}}dt < \infty,\quad \text{for } 1\leq p < +\infty, \]
and
\[ \norm{X}_{L^{\infty}(\R_+; L^q(\Omega;\R^n))} = \sup_{t \geq 0} \lp \E \abs{X_t}^q \rp^{\frac{1}{q}} < \infty.\]
\end{itemize}

\noindent The aim of this work is to give some necessary (and sufficient) condition for optimality of a controlled system of the form
\begin{equation}\label{SDE}
\begin{system}
dX_t = b(X_t,u_t)dt + \sigma(X_t,u_t)dW_t, \qquad t\geq 0,\\
X_0 = x,
\end{system}
\end{equation}when a cost functional of ergodic type has to be minimized. The form of the cost functional slightly differs when considering a $\liminf$ or a $\limsup$ formulation. 
We define a truncated cost functional in the following form
\begin{equation}\label{cost truncated}
J_T (u(\cdot)) = \E\int_0^T f(X_t,u_t)dt.
\end{equation}
Let us denote the two forms in the following way
\begin{equation}\label{eq:cost_inf}
J^{\inf} (u(\cdot)) = \liminf_{T \to \infty} \frac{1}{T} J_T(u(\cdot)) = \liminf_{T \to \infty} \frac{1}{T}\E\int_0^T f(X_t,u_t)dt,
\end{equation}
\begin{equation}\label{eq:cost_sup}
J^{\sup} (u(\cdot)) = \limsup_{T \to \infty} \frac{1}{T} J_T(u(\cdot)) = \limsup_{T \to \infty} \frac{1}{T}\E\int_0^T f(X_t,u_t)dt.
\end{equation}
An control process $\bar{u}(\cdot)$ is said to be optimal either if
\begin{equation}\label{ergodic cost}
J^{\inf} (\bar{u}(\cdot)) = \inf_{u(\cdot) \in \mathcal{U}} J(u(\cdot)) \qquad \text{ or } \qquad J^{\sup} (\bar{u}(\cdot)) = \inf_{u(\cdot) \in \mathcal{U}} J(u(\cdot)),
\end{equation}
where $\mathcal{U}$ indicates a class of admissible controls.
Now we give some assumptions on the state equation and on the control actions.

\begin{hypothesis}\label{Hyp} Assumptions involve three constants $m\geq 0$ and $p>(4m+2){\color{red}\vee 4}$ and $k>(p-1)/2$ 
 that we fix now and for the rest of the paper.
\begin{itemize}
\item[]
\item[]
\item[(H1)] ({\bf{Controls}}) $U$ is a closed convex subset of $\R^l$. Moreover $u$ is a progressively measurable $U$-valued process. We say that $u$ is an \textit{admissible control} if it satisfies:
\begin{equation}
\sup_{t \geq 0}\E\abs{u_t}^p < +\infty.
\end{equation}

\item[(H2)] ({\bf{Polynomial growth}}) The vector field $b: \R^n \times U \rightarrow \R^n$ is $\mathcal{B}(\R^n)\otimes \mathcal{B}(U)$-measurable and $\mathcal{C}^2$ with respect to $x$ and $u$. There exists $C_1 >0$ such that 
\[ \abs{D_ub(x,u)} \leq C_1, \qquad x \in \R^n, u \in U. \]
Moreover:
\begin{equation} \label{eq:grad b}
\sup_{u \in U}\sup_{x \in \mathbb{R}^n} \frac{\abs{D^{\beta}_x b(x,u)} }{1+ \abs{x}^{2m+1-\beta} + \abs{u}^{1-\beta}} < +\infty,\qquad \beta =0,1.
\end{equation}

\item[(H3)] ({\bf{Polynomial growth}}) The mapping $\sigma: \R^n \times U \rightarrow \R^{n\times d} $ is measurable with respect to $\mathcal{B}(\R^n) \otimes \mathcal{B}(U)$. There exists $C_2 >0$ such that 
\[ \norm{D_u\sigma(x,u)}_2 \leq C_2, \qquad x \in \R^n, u \in U. \]
Moreover it is $\mathcal{C}^2$ with respect to $x$, $u$ and:
\begin{equation} \label{eq:grad sigma}
\sup_{u\in U}\sup_{x \in \mathbb{R}^n} \frac{\norm{D^{\beta}_x \sigma(x,u)}_2 }{1+ \abs{x}^{m-\beta} + \abs{u}^{1-\beta}} < +\infty,\qquad \beta =0,1.
\end{equation}
\smallskip
\item[(H4)] ({\bf{Joint dissipativity}}) There is $c_p <0$ such that
\begin{equation} \label{eq:joint dissipativity}
\braket{D_xb(x,u)y,y} + k\norm{D_x\sigma(x,u)y}^2_2 \leq c_p \abs{y}^2, \qquad x,y \in \mathbb{R}^n, u \in U.
\end{equation}
\smallskip
\item[(H5)] ({\bf{Cost}}) The function $f: \R^n \times U \rightarrow \R$ is $\mathcal{B}(\R^n)\otimes \mathcal{B}(U)$-measurable, bounded from below by a constant $f_0$, it is differentiable in $x$ and $u$ and 
\[ \abs{D_xf(x,u)} + \abs{D_uf(x,u)}\leq C(1 + \abs{x} + \abs{u}),\]
for some $C >0$.
\end{itemize}
\end{hypothesis}

\begin{remark}
We refer to \cite{cerrai2001second} and \cite{orrieri2015necessary} for a discussion on the joint monotonicity and on the relation between the growth of $b$ and $\sigma$. Concerning (H5), here we limit ourselves to linear growth for simplicity. A general polynomial growth can be easily achieved.
\end{remark}

\begin{remark}
The choice of $p > 4m+2$ in (H1)-(H4) comes from the interplay between the dissipative behaviour of the system and polynomial growth of the coefficients. Actually this bound can be easily derived from the maximal moment of the state process that we need to estimate in the proofs (see Proposition \ref{l:convergence X tilde}). The condition for $k$ is then the natural one. 
\end{remark}
We can state the following

\begin{theorem}\label{t.existence.SDE}
Assume that Hypothesis \ref{Hyp} holds true. Then, for every $x \in \R^n$ and every admissible control $u(\cdot)$, equation \eqref{SDE} admits a unique progressively measurable solution for each admissible control. 
Moreover, the following estimate holds
\begin{equation}\label{eq:estimate.state}
\E\abs{X_t}^{p} \leq e^{-p\beta t}\abs{x}^{p} + K \sup_{t \geq 0}\E \abs{u_t}^{p},
\end{equation}
for some positive constants $K = K(p,c_p)$ and $\beta$.
\end{theorem}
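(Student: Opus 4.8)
The plan is to split the statement into well-posedness and the moment bound, both of which hinge on extracting a \emph{coefficient-level} dissipativity estimate from the differential condition (H4). First I would integrate (H4) along the segment $s\mapsto x'+s(x-x')$: writing $b(x,u)-b(x',u)=\int_0^1 D_xb(x'+s(x-x'),u)(x-x')\,ds$ and using $\braket{D_xb(z,u)y,y}\le c_p\abs{y}^2-k\norm{D_x\sigma(z,u)y}_2^2$ with $y=x-x'$, together with the Cauchy--Schwarz bound $\norm{\sigma(x,u)-\sigma(x',u)}_2^2\le\int_0^1\norm{D_x\sigma(x'+s(x-x'),u)(x-x')}_2^2\,ds$, yields the monotonicity estimate
\[
\braket{b(x,u)-b(x',u),x-x'}+k\norm{\sigma(x,u)-\sigma(x',u)}_2^2\le c_p\abs{x-x'}^2 .
\]
Taking $x'=0$ and inserting the growth of $b(0,\cdot),\sigma(0,\cdot)$ from (H2)--(H3) (which are linear in $u$ at $x=0$), a Young splitting of $\norm{\sigma(x,u)}_2^2$ against $\norm{\sigma(x,u)-\sigma(0,u)}_2^2$ that is admissible precisely because $k>(p-1)/2$ produces the \emph{radial} estimate
\[
\braket{b(x,u),x}+\tfrac{p-1}{2}\norm{\sigma(x,u)}_2^2\le -2\beta\abs{x}^2+C\bigl(1+\abs{u}^2\bigr),
\]
with $\beta>0$ obtained from $c_p<0$ after absorbing the $\braket{b(0,u),x}$ term.

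For existence and uniqueness I would use that $b,\sigma$ are $\mathcal{C}^2$, hence locally Lipschitz, so there is a unique strong solution up to a (possibly finite) explosion time $\tau$. Pathwise uniqueness follows by applying It\^o's formula to $\abs{X_t-X_t'}^2$ for two solutions driven by the same noise: the monotonicity estimate bounds the drift increment while the diffusion correction $\norm{\sigma(X,u)-\sigma(X',u)}_2^2$ is dominated since $2k>1$, so Gronwall with $X_0=X_0'$ forces $X\equiv X'$. Global existence (that is, $\tau=\infty$) is a consequence of the a priori moment bound proved below, which keeps $\E\abs{X_{t\wedge\tau_n}}^p$ finite uniformly in the localizing sequence and hence sends $\tau_n\to\infty$; this is the standard monotone-coefficient SDE argument, for which I would cite \cite{cerrai2001second}.

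For the moment estimate I would apply It\^o's formula to $V(x)=\abs{x}^p$ (which is $\mathcal{C}^2$ for $p\ge2$), localize by the stopping times $\tau_n=\inf\{t\ge0:\abs{X_t}\ge n\}$ so that the stochastic integral has zero expectation, and take expectations. Bounding the rank-one part of the Hessian via $\abs{\sigma^* x}^2\le\norm{\sigma}_2^2\abs{x}^2$ collapses the generator to
\[
\mathcal{L}\abs{x}^p\le p\abs{x}^{p-2}\Bigl[\braket{b(x,u),x}+\tfrac{p-1}{2}\norm{\sigma(x,u)}_2^2\Bigr],
\]
into which the radial estimate plugs directly, giving $\mathcal{L}\abs{x}^p\le -2p\beta\abs{x}^p+pC\abs{x}^{p-2}(1+\abs{u}^2)$. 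A Young inequality absorbs $p\abs{x}^{p-2}(1+\abs{u}^2)$ into $p\beta\abs{x}^p+C_\beta(1+\abs{u}^p)$, so that
\[
\frac{d}{dt}\E\abs{X_{t\wedge\tau_n}}^p\le -p\beta\,\E\abs{X_{t\wedge\tau_n}}^p+C\bigl(1+\E\abs{u_t}^p\bigr),
\]
and the differential form of Gronwall followed by $n\to\infty$ (Fatou) yields \eqref{eq:estimate.state}.

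The main obstacle, and the place where the hypotheses are genuinely used, is the passage from the differential dissipativity (H4) to the radial bound with the \emph{sharp} constant $\tfrac{p-1}{2}$: the It\^o correction for $\abs{x}^p$ produces exactly the factor $\tfrac{p(p-1)}{2}$ in front of $\norm{\sigma}_2^2$, so the diffusion term can be dominated by the dissipative budget $k$ only when $k>(p-1)/2$, and the $u$-dependence must be funneled through the values of $b,\sigma$ at $x=0$ rather than through their polynomial-in-$x$ growth. The remaining care is bookkeeping in the localization: one must verify the stochastic integral has zero expectation after stopping and that the uniform-in-$n$ bound survives the limit, which is routine once the differential inequality is in hand.
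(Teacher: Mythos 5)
Your proposal is correct and follows essentially the same route as the paper: both arguments rest on It\^o's formula for $|x|^p$, the joint dissipativity (H4) applied after adding and subtracting the coefficients at $x=0$, weighted Young inequalities exploiting $k>(p-1)/2$ to absorb the It\^o correction term, and a Gronwall-type conclusion, with existence/uniqueness delegated to standard monotone-coefficient SDE theory. The only organizational differences are that the paper works with the weighted process $\tilde X_t = e^{\beta t}X_t$ rather than your differential Gronwall (which, incidentally, is the cleanest way to settle your deferred localization bookkeeping), and that it leaves implicit the segment-integration step by which you pass from the differential condition (H4) to the integrated monotonicity and radial estimates.
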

\begin{proof} 
Define $\tilde{X}_t := e^{\beta t}X_t$ for a positive $\beta$. Then $\tilde{X}$ solves
\begin{equation}\label{eq:SDE tilde}
\begin{system}
d\tilde{X}_t = \beta\tilde{X}_t +  e^{\beta t} b( e^{-\beta t} \tilde{X}_t,u_t)dt
+ e^{\beta t}\sigma ( e^{-\beta t} \tilde{X}_t,u_t)dW_t, \qquad \forall t\geq 0,\\
\tilde{X}_0 = x.
\end{system}
\end{equation}
If we call $\tilde{b}_t(x,u) = e^{\beta t} b\bigl( e^{-\beta t} x, u \bigr)$ and
$\tilde{\sigma}_t(x,u) = e^{\beta t} \sigma \bigl( e^{-\beta t} x,u \bigr)$ then also
$\tilde{b_t}, \tilde{\sigma_t}$ satisfy Hypothesis \ref{Hyp}. In particular the joint dissipativity holds with the same constant
\begin{equation}
\braket{\tilde{b}_t(x,u) - \tilde{b}_t(y,u), x-y} + \frac{p-1}{2}\norm{\tilde{\sigma}_t(x,u) - \tilde{\sigma}_t(y,u)}^2_2
\leq c_p |x-y|^2.
\end{equation}
Let $p \geq 2$, denote $p = 2q$ and $\tilde a = \tilde \sigma(x,u)^*\tilde \sigma(x,u) $ (we omit the time dependence $\tilde \sigma = \tilde \sigma_t$ when it is clear). We apply the It\^o formula to the function $f(x) = \abs{x}^{2q}$ to get
\begin{equation*}
\begin{split}
\E &\abs{\tilde{X}_t}^{2q} = |x|^{2q}
+ 2q\E \int^t_0 \abs{\tilde{X}_s}^{2(q-1)}\left(\braket{\tilde{X}_s, \tilde{b}( \tilde X_s,u_s)}
+ \frac{1}{2} \norm{\tilde\sigma(\tilde X_s,u_s)}_2^2 \right) ds \\
&\quad+ 2q\beta\E\int_0^t\abs{\tilde{X}_s}^{2q}ds + 2q(q-1)\E \int^t_0  \abs{\tilde{X}_s}^{2(q-2)}Tr\lsz \tilde{a}_s \lp \tilde{X}_s \otimes \tilde{X}_s \rp \psz ds \\
&\leq |x|^{2q} + 2q\E \int^t_0 \abs{\tilde{X}_s}^{2(q-1)}\left(\braket{\tilde{X}_s, \tilde{b}( \tilde X_s,u_s)} +(q- \frac{1}{2}) \norm{\tilde\sigma(\tilde X_s,u_s)}_2^2 \right) ds + 2q\beta\E\int_0^t\abs{\tilde{X}_s}^{2q}ds\\
&\leq |x|^{2q} + 2q\E\int_0^t\abs{\tilde{X}_s}^{2(q-1)}\left(\braket{\tilde{X}_s, \tilde{b}(\tilde X_s,u_s) - \tilde{b}(0,u_s)} + (q -\frac{1}{2})(1+\varepsilon)\norm{\tilde\sigma(\tilde X_s,u_s) - \tilde\sigma(0,u_s)}_2^2 \right) ds \\
\end{split}
\end{equation*}
\begin{equation*}
\begin{split}
&\quad+2q\E\int_0^t\abs{\tilde{X}_s}^{2(q-1)}\left(\braket{\tilde{X}_s, \tilde{b}(0,u_s)}
+ c_\varepsilon\norm{\tilde\sigma(0,u_s)}_2^2 \right) ds + 2q\beta\E\int_0^t\abs{\tilde{X}_s}^{2q}ds\\
&\leq |x|^{2q} + 2q\left( c_{r} + \beta + \frac{\delta}{2}\right)\E\int_0^t\abs{\tilde{X}_s}^{2q}ds \qquad \qquad \qquad \qquad  \qquad \qquad(\text{ with } r = 2q(1+\varepsilon) - \varepsilon)\\
&\quad+2q\E\int_0^t\abs{\tilde{X}_s}^{2(q-1)}\left(\frac{1}{2\delta}\abs{\tilde{b}(0,u_s)}^2 + c_\varepsilon\norm{\tilde\sigma(0,u_s)}_2^2 \right) ds \\
&\leq |x|^{2q} + 2q\left( c_{r} + \beta + \frac{\delta}{2} + c_\delta\delta^{q/(q-1)} \right)\E\int_0^t\abs{\tilde{X}_s}^{2q}ds\\
&\quad+ 2q\E \int_0^t e^{-2q\beta t}\left(\frac{1}{2^q \delta^{q+1}q} + \frac{c_\varepsilon}{\delta^q} \right)\abs{u_s}^{2q}ds, \\
\end{split}
\end{equation*}
where we employed joint dissipativity for the process $\tilde X$, we repeatedly used weighted Young inequality and in the end the growth condition on the coefficients. Choosing $\beta$ and $\delta$ small enough, thanks to Hypothesis (H1) we end up with the following estimate
\begin{equation}
\begin{split}
\E\abs{X_t}^{2q} &\leq e^{-2q\beta t}\abs{x}^{2q} + C \int_0^t e^{-2q\beta(t-s)} \E \abs{u_s}^{2q}ds \\
&\leq e^{-2q\beta t}\abs{x}^{2q} + C \sup_{t \geq 0}\E \abs{u_t}^{2q}.
\end{split}
\end{equation}
Notice that, taking the supremum on both sides we also have that
\begin{equation}
\sup_{t \geq 0}\E\abs{X_t}^{2q} \leq C(\abs{x}^{2q} + 1),
\end{equation}
and the claim is proved.\end{proof}
\section{Perturbation of the controls}
When considering ergodic control problems we can not expect to gain information by the use of local in time perturbations of the optimal control. 

 More precisely, let ${u}^i(\cdot)$, $i=1,2$ are admissible controls with $u^1_t= {u}^2_t $ for all $t>T_0$. If one denotes by ${X}^i$, $i=1,2$ the corresponding states then by the dissipativity assumption (H4) one gets $\mathbb{E}|X^1_t-X^2_t|^2\rightarrow 0$ for $t>T_0$ exponentially fast (let us say with exponential decay $\varepsilon$).  Consequently (assume for a moment that $f$ is Lipschitz) 
 \begin{equation}
\begin{split}
| J(u^1(\cdot)) - J({u}^2(\cdot))| &=
\lim_{T \to \infty} \frac{1}{T}\E\int_0^T \left| f({X}^1_t,{u}^1_t) - f({X}^2_t,{u}^2_t)\right|dt \\
&=
\lim_{T \to \infty} \frac{1}{T}\E\int_{T_0}^T \left| f({X}^1_t,{u}^1_t) - f({X}^2_t,{u}^1_t)\right|dt \\
&\leq C \lim_{T \to \infty} \frac{1}{T}\int_{T_0}^T e^{-\varepsilon t} dt = 0,
\end{split}
\end{equation}
 
This is the reason for considering the perturbations which act on the system up to infinity. Notice that it is crucial to require that   $U$ is convex. \smallskip

Let then $ \bar{u}(\cdot)$ be an optimal control for the ergodic control problem \eqref{ergodic cost} and denote the corresponding state process as $\bar{X}$.
For  $\theta \in (0,1]$ and $u(\cdot)$ admissible control define $u^{\theta}$ as a convex combination by $u^{\theta}(\cdot): 
=
(1-\theta)  \bar{u}(\cdot) + \theta u(\cdot)=  \bar{u}(\cdot) + \theta v(\cdot)$, where $v(\cdot) := u(\cdot) - \bar{u}(\cdot)$.
Then $u^{\theta}(\cdot)$ is admissible and the corresponding state is denoted by $X^{\theta}$. 

\begin{lemma}\label{lemma:x epsilon}
Under Hypothesis \ref{Hyp} the following holds
\begin{equation*}
\operatorname{sup}_{t \geq 0} \E \abs{ X^{\theta}_t - \bar{X}_t }^p \leq C \theta^2 \operatorname{sup}_{t \geq 0}  \E \abs{ v_t }^p.
\end{equation*} 
Where $C$ only depends on the constants appearing in Hypothesis \ref{Hyp}.
\end{lemma}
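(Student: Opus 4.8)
The plan is to set $\eta^\theta_t := X^\theta_t - \bar X_t$ and estimate $\E\abs{\eta^\theta_t}^p$ uniformly in $t$ by an It\^o/dissipativity argument. First note that, since $U$ is convex and $\bar u,u$ are admissible, $u^\theta=\bar u+\theta v$ is admissible with $\sup_t\E\abs{u^\theta_t}^p<\infty$; hence by Theorem \ref{t.existence.SDE} both $X^\theta$ and $\bar X$ exist and satisfy $\sup_t\E\abs{X^\theta_t}^p+\sup_t\E\abs{\bar X_t}^p\le C(\abs{x}^p+1)$. Since $\eta^\theta_0=0$, the process $\eta^\theta$ solves $d\eta^\theta_t=\Delta b_t\,dt+\Delta\sigma_t\,dW_t$ with $\Delta b_t:=b(X^\theta_t,u^\theta_t)-b(\bar X_t,\bar u_t)$ and $\Delta\sigma_t:=\sigma(X^\theta_t,u^\theta_t)-\sigma(\bar X_t,\bar u_t)$. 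The key is to separate in each increment the \emph{state part} (difference in $x$ at frozen control, to be absorbed by dissipativity) from the \emph{control part} (difference in $u$ at frozen state, of size $\theta$, controlled by $\abs{D_ub}\le C_1$, $\norm{D_u\sigma}_2\le C_2$):
\[
\Delta b_t=\underbrace{\big[b(X^\theta_t,u^\theta_t)-b(\bar X_t,u^\theta_t)\big]}_{=:\Delta^x b_t}+\theta B_t,\qquad \Delta\sigma_t=\underbrace{\big[\sigma(X^\theta_t,u^\theta_t)-\sigma(\bar X_t,u^\theta_t)\big]}_{=:\Delta^x\sigma_t}+\theta\Sigma_t,
\]
where $B_t=\int_0^1 D_ub(\bar X_t,\bar u_t+\lambda\theta v_t)v_t\,d\lambda$ and $\Sigma_t$ is the analogous term for $\sigma$, so that $\abs{B_t}\le C_1\abs{v_t}$ and $\norm{\Sigma_t}_2\le C_2\abs{v_t}$.

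Writing $p=2q$, I would apply It\^o's formula to $\abs{\eta^\theta_t}^{2q}$ (justifying the vanishing of the stochastic integral by the standard localization, using the moment bounds above together with the polynomial growth \eqref{eq:grad b}--\eqref{eq:grad sigma}). Taking expectations and bounding the second-order term by the Hilbert--Schmidt norm, one reaches
\[
\E\abs{\eta^\theta_t}^{2q}\le 2q\,\E\int_0^t\abs{\eta^\theta_s}^{2q-2}\Big(\braket{\eta^\theta_s,\Delta b_s}+\tfrac{p-1}{2}\norm{\Delta\sigma_s}_2^2\Big)ds,
\]
the constant $\tfrac{p-1}{2}$ being exactly $q(2q-1)/(2q)$. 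Now integrating \eqref{eq:joint dissipativity} along the segment joining $\bar X_s$ to $X^\theta_s$ at frozen control $u^\theta_s$ gives the finite-increment dissipativity $\braket{\eta^\theta_s,\Delta^x b_s}+k\norm{\Delta^x\sigma_s}_2^2\le c_p\abs{\eta^\theta_s}^2$. Because $k>(p-1)/2$, I can split $\norm{\Delta\sigma_s}_2^2\le(1+\varepsilon)\norm{\Delta^x\sigma_s}_2^2+C_\varepsilon\theta^2\norm{\Sigma_s}_2^2$ with $\varepsilon$ small enough that $\tfrac{p-1}{2}(1+\varepsilon)\le k$, absorb the state part into $c_p\abs{\eta^\theta_s}^2$, and be left with only the control terms $\theta\braket{\eta^\theta_s,B_s}+C\theta^2\norm{\Sigma_s}_2^2\le C_1\theta\abs{\eta^\theta_s}\abs{v_s}+C\theta^2\abs{v_s}^2$.

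Weighted Young's inequality on these two cross terms (always pairing the $\eta$-factor up to the power $2q$) produces $\delta\abs{\eta^\theta_s}^{2q}+C_\delta\theta^{2q}\abs{v_s}^{2q}$, so that, for $\delta$ small,
\[
\E\abs{\eta^\theta_t}^{2q}\le 2q(c_p+\delta)\E\int_0^t\abs{\eta^\theta_s}^{2q}ds+C\theta^{2q}\int_0^t\E\abs{v_s}^{2q}ds.
\]
Choosing $\delta$ so that $-\beta':=c_p+\delta<0$ and applying Gronwall in variation-of-constants form yields $\E\abs{\eta^\theta_t}^{2q}\le C\theta^{2q}\int_0^t e^{-2q\beta'(t-s)}\E\abs{v_s}^{2q}ds\le C\theta^{2q}\sup_s\E\abs{v_s}^{2q}$. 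Taking the supremum in $t$ and recalling $\theta\in(0,1]$, $p=2q\ge2$, gives $\sup_t\E\abs{\eta^\theta_t}^p\le C\theta^p\sup_t\E\abs{v_t}^p\le C\theta^2\sup_t\E\abs{v_t}^p$, which is the claim. The main obstacle is the \emph{uniformity in time}: a plain Gronwall estimate would blow up like $e^{CT}$, so it is essential that the strict negativity of $c_p$ survives both the absorption of the It\^o correction (guaranteed by $k>(p-1)/2$) and the Young remainders (guaranteed by taking $\delta$ small), producing genuine exponential decay and hence a finite $\sup_{t\ge0}$. Note also that the polynomial growth of $b,\sigma$ in $x$ never enters dangerously, since differences in $x$ are always controlled by dissipativity and differences in $u$ only through the bounded derivatives $D_ub,D_u\sigma$.
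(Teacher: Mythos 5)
Your proof is correct and follows essentially the same route as the paper's: It\^o's formula applied to $|X^\theta_t-\bar X_t|^{2q}$, splitting each increment into a state part absorbed by the joint dissipativity (H4) (with the room $k>(p-1)/2$ accommodating the It\^o correction and the $(1+\varepsilon)$-split) and a control part of size $\theta|v_t|$ via the bounded derivatives $D_ub$, $D_u\sigma$, followed by weighted Young and an exponential-decay Gronwall. The only cosmetic differences are that the paper implements the decay through the weighting $\Delta\tilde X^\theta_t=e^{\beta t}\Delta X^\theta_t$ rather than a variation-of-constants Gronwall, and that it factors out $\theta^2$ before applying Young (landing directly on $C\theta^2$), whereas you obtain the slightly sharper $C\theta^p$ and then use $\theta\le 1$.
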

\begin{proof}
Denote $\Delta X^{\theta}_t := X^{\theta}_t - \bar{X}_t$ and write the corresponding equation
\begin{equation}
\Delta X^{\theta}_t = \int_0^t \left[ b(X^{\theta}_s,u^{\theta}_s) - b(\bar{X}_s, \bar{u}_s) \right]ds + \int_0^t \left[ \sigma(X^{\theta}_s,u^{\theta}_s) - \sigma(\bar{X}_s, \bar{u}_s) \right]dW_s.
\end{equation}
Following the technique developed in the proof of Theorem \ref{t.existence.SDE} we define $\Delta \tilde{X}^\theta_t := e^{\beta t}\Delta X^\theta_t$ for a positive $\beta$. Then the It\^o formula gives
\begin{equation*}
\begin{split}
&\E\abs{ \Delta \tilde X^{\theta}_t}^{2q}\\
 &\leq  2q\E\int^t_0 \abs{\Delta \tilde X^{\theta}_s}^{2(q-1)}\lbr \braket{\tilde b(\tilde X^{\theta}_s, u^\theta_s) - \tilde b(\tilde{\bar{X}}_s,u^\theta_s) ,\Delta \tilde X^{\theta}_s}   + (q-\frac{1}{2})(1+\varepsilon)\norm{\tilde \sigma(\tilde X^{\theta}_s,u^{\theta}_s) - \tilde \sigma(\tilde{\bar{X}}_s, u^\theta_s)}_2^2\rbr ds\\
&\quad+ 2q \beta\E\int^t_0  \abs{\Delta \tilde X^{\theta}_s}^{2q}ds  + 2q \E\int^t_0  \abs{\Delta \tilde X^{\theta}_s}^{2(q-1)}\braket{ \int^1_0 D_u \tilde b( \tilde{\bar{X}}_s, \bar{u}_s + \lambda \theta v_s )\theta v_s d\lambda , \Delta \tilde X^{\theta}_s} ds \\
&\quad+ 2qc_\varepsilon\E \int^t_0  \abs{\Delta\tilde X^{\theta}_s}^{2(q-1)} \norm{\int_0^1 D_u\tilde \sigma(\tilde{\bar X}_s,\bar u_s + \lambda \theta v_s)\theta v_s d\lambda}_2^2 ds \\
&\leq 2q(c_r + \beta + \frac{\delta}{2})\int^t_0 \abs{ \Delta \tilde X^{\theta}_s }^{2q}ds + 2q\theta^2\E\int_0^t \abs{ \Delta \tilde X^{\theta}_s }^{2(q-1)}\frac{e^{2\beta s}}{2\delta}\abs{v_s}^2ds \quad \qquad(\text{ with } r = 2q(1+\varepsilon) - \varepsilon)\\ 
&\quad+ 2qc_\varepsilon\theta^2\E\int_0^t \abs{ \Delta \tilde X^{\theta}_s }^{2(q-1)}e^{2\beta s}\abs{v_s}^2ds \\
&\leq 2q\left(c_r + \beta + \frac{\delta}{2} + c_\delta \theta^2 \delta^{q/(q-1)}\right)\int^t_0 \abs{ \Delta \tilde X^{\theta}_s }^{2q}ds\\
&\quad+ 2q\theta^2\E\int_0^t e^{2q\beta s}\left( \frac{1}{2^q\delta^{q+1}}+ \frac{c_\varepsilon}{\delta^q} \right)\abs{v_s}^{2q} ds. 
\end{split}
\end{equation*}
Where we used the joint dissipativity and weighted Young inequality, for every $\delta >0$. Choosing $\beta$, $\delta$ small enough, from the boundedness of $\sup_{s\geq 0}\E\abs{v_s}^{2q}$ we get
\begin{equation*}
\E\abs{ \Delta X^{\theta}_t}^{2q} \leq C\theta^2 \E \int_0^t e^{-2q\beta (t-s)}ds.
\end{equation*}
The result follows by taking the supremum in time and finally by sending $\theta \rightarrow 0_+$.

\end{proof}

\noindent Now we introduce the first variation equation of the system. Notice that in the equation appears the derivative of the coefficients with respect to the control, which are bounded due to our assumptions.
\begin{equation}\label{eq:first:variation}
\begin{system}
dY_t = \left[ D_xb(\bar{X}_t,\bar{u}_t)Y_t + D_ub(\bar{X}_t,\bar{u}_t)v_t \right]dt + \left[ D_x\sigma(\bar{X}_t,\bar{u}_t)Y_t + D_u\sigma(\bar{X}_t,\bar{u}_t)v_t \right]dW_t,\\
Y_0 = 0,
\end{system}
\end{equation}

\begin{lemma}\label{lemma:y}
Under Hypothesis \ref{Hyp}, the first variation equation \eqref{eq:first:variation} admits a unique adapted solution. Moreover the following estimate holds true 
\begin{equation}\label{eq:pth moment first variation}
\E \lav Y_t \rav^p \leq K \sup_{s \in [0,t]}\E\abs{v_s}^p.
\end{equation}
where again $K$ only depends on the constants appearing in Hypothesis \ref{Hyp}.

In particular, $\sup_{t \geq 0}\E \lav Y_t \rav^p \leq K \sup_{t \geq 0}\E \lav v_t \rav^p <+\infty$.
\end{lemma}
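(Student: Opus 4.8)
The plan is to read \eqref{eq:first:variation} as a \emph{linear} SDE in $Y$ and to repeat, essentially verbatim, the dissipative energy estimate already performed for Theorem \ref{t.existence.SDE} and Lemma \ref{lemma:x epsilon}. Writing $A_t := D_xb(\bar X_t,\bar u_t)$, $B_t := D_x\sigma(\bar X_t,\bar u_t)$, $g_t := D_ub(\bar X_t,\bar u_t)v_t$ and $h_t := D_u\sigma(\bar X_t,\bar u_t)v_t$, the equation becomes $dY_t = (A_tY_t + g_t)\,dt + (B_tY_t + h_t)\,dW_t$ with $Y_0=0$. By (H2)--(H3) the coefficients $A_t,B_t$ are progressively measurable and, thanks to their polynomial growth combined with the state estimate \eqref{eq:estimate.state} and the admissibility (H1), have finite moments up to order $p$; moreover $\abs{g_t}\leq C_1\abs{v_t}$ and $\norm{h_t}_2\leq C_2\abs{v_t}$. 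Since $y\mapsto A_ty$ is, for each fixed $(\omega,t)$, globally Lipschitz in $y$ with a random Lipschitz constant of finite moment, and the joint dissipativity (H4) supplies the one-sided monotonicity, existence and uniqueness of an adapted solution follow from the same dissipative framework used for \eqref{SDE}, the a priori bound below ruling out explosion.

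For the moment estimate I would again set $\tilde Y_t := e^{\beta t}Y_t$ for a small $\beta>0$ to be fixed, so that
\[
d\tilde Y_t = \bigl(\beta\tilde Y_t + A_t\tilde Y_t + e^{\beta t}g_t\bigr)\,dt + \bigl(B_t\tilde Y_t + e^{\beta t}h_t\bigr)\,dW_t,
\]
and apply It\^o's formula to $\abs{\tilde Y_t}^{2q}$ with $p=2q$. After the usual combination of the second-order It\^o terms the diffusion contributes a factor $(q-\tfrac{1}{2})\norm{B_t\tilde Y_t + e^{\beta t}h_t}_2^2$, which a weighted Young inequality splits as $(q-\tfrac{1}{2})(1+\varepsilon)\norm{B_t\tilde Y_t}_2^2 + c_\varepsilon e^{2\beta t}\norm{h_t}_2^2$. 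The crucial structural point is the choice $k>(p-1)/2 = q-\tfrac{1}{2}$ fixed in Hypothesis \ref{Hyp}: for $\varepsilon$ small enough one has $(q-\tfrac{1}{2})(1+\varepsilon)\leq k$, and since $\norm{B_t\tilde Y_t}_2^2\geq 0$ the dissipativity \eqref{eq:joint dissipativity} gives
\[
\braket{A_t\tilde Y_t,\tilde Y_t} + (q-\tfrac{1}{2})(1+\varepsilon)\norm{B_t\tilde Y_t}_2^2 \leq \braket{A_t\tilde Y_t,\tilde Y_t} + k\norm{B_t\tilde Y_t}_2^2 \leq c_p\abs{\tilde Y_t}^2,
\]
absorbing \emph{all} the unbounded $D_x$-terms into the strictly negative $c_p\abs{\tilde Y_t}^2$.

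The remaining forcing terms are treated exactly as before: the drift cross term $\abs{\tilde Y_t}^{2(q-1)}\braket{\tilde Y_t,e^{\beta t}g_t}$ and the diffusion remainder $\abs{\tilde Y_t}^{2(q-1)}e^{2\beta t}\norm{h_t}_2^2$ are each broken, via weighted Young inequalities, into a multiple of $\abs{\tilde Y_t}^{2q}$ plus a contribution of the form $e^{2q\beta t}\abs{v_t}^{2q}$, using $\abs{g_t}\leq C_1\abs{v_t}$ and $\norm{h_t}_2\leq C_2\abs{v_t}$. Choosing $\beta$ and the Young parameters small enough that the total coefficient of $\E\int_0^t\abs{\tilde Y_s}^{2q}\,ds$ is non-positive, and recalling $Y_0=0$, I obtain $\E\abs{\tilde Y_t}^{2q}\leq C\,\E\int_0^t e^{2q\beta s}\abs{v_s}^{2q}\,ds$, whence, multiplying by $e^{-2q\beta t}$,
\[
\E\abs{Y_t}^{p} \leq C\int_0^t e^{-2q\beta(t-s)}\,\E\abs{v_s}^{p}\,ds \leq \frac{C}{2q\beta}\,\sup_{s\in[0,t]}\E\abs{v_s}^{p}.
\]
Taking the supremum over $t\geq 0$ and invoking admissibility (H1) yields the final assertion with $K=C/(2q\beta)$ depending only on the constants of Hypothesis \ref{Hyp}. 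The single genuinely delicate point is the displayed dissipativity inequality: it closes precisely because $k$ exceeds $(p-1)/2$ \emph{strictly}, leaving room to absorb the $(1+\varepsilon)$ loss generated by the cross term in the diffusion; everything else is a routine repetition of the computation in Theorem \ref{t.existence.SDE}, with the simplification that, $Y_0=0$, no initial datum survives.
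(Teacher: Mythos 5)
Your proposal is correct and follows exactly the route the paper intends: the paper's own proof is a two-line remark stating that the argument of Theorem \ref{t.existence.SDE} carries over, the crucial points being the uniform boundedness of $D_ub$ and $D_u\sigma$ together with (H1), which are precisely the ingredients you isolate ($\abs{g_t}\leq C_1\abs{v_t}$, $\norm{h_t}_2\leq C_2\abs{v_t}$) before repeating the exponential-weighting/It\^o/Young/dissipativity computation. Your explicit use of (H4) in its differentiated form, with the observation that $k>(p-1)/2$ leaves room for the $(1+\varepsilon)$ loss, is exactly the mechanism implicit in the paper's Theorem \ref{t.existence.SDE} estimate (there written via the constant $c_r$ with $r=2q(1+\varepsilon)-\varepsilon$), so the two proofs coincide in substance.
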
 
\begin{proof}
The proof goes through by the same technique adopted in Theorem \ref{t.existence.SDE}. What is crucial here is the uniform boundedness of $D_u b(x,u)$ and $D_u\sigma(x,u)$, along with the assumption (H1) on admissible controls.    
\end{proof}

The following lemma is fundamental in order to obtain the right expansion of the cost functional with respect to the control.
\begin{proposition}\label{l:convergence X tilde}
Under our assumptions the process $\hat{X}^{\theta}$ defined as
\begin{equation*}
\hat{X}^{\theta}_t = \dfrac{X^{\theta}_t - \bar{X}_t}{\theta} - Y_t,
\end{equation*}
satisfies
\begin{equation}
\lim_{\theta \rightarrow 0_+} \sup_{t \geq 0}\E \abs{ \hat{X}^{\theta}_t}^2 = 0.
\end{equation}
\end{proposition}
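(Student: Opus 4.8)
The plan is to write down the equation solved by $\hat X^\theta$ and to recognise its drift and diffusion as a strongly dissipative linear part acting on $\hat X^\theta$ plus two remainders that vanish as $\theta \to 0_+$. Set $\Delta X^\theta_t := X^\theta_t - \bar X_t$, so that $\Delta X^\theta_t = \theta(\hat X^\theta_t + Y_t)$ and $u^\theta_t - \bar u_t = \theta v_t$, and abbreviate the intermediate point $\Xi^\lambda_t := (\bar X_t + \lambda \Delta X^\theta_t,\, \bar u_t + \lambda \theta v_t)$. Expanding $b(X^\theta_t,u^\theta_t) - b(\bar X_t,\bar u_t)$ and $\sigma(X^\theta_t,u^\theta_t) - \sigma(\bar X_t,\bar u_t)$ by the integral form of the mean value theorem along the segment $\Xi^\lambda_t$, dividing by $\theta$ and subtracting the first variation equation \eqref{eq:first:variation}, one finds that $\hat X^\theta$ solves a linear SDE with $\hat X^\theta_0 = 0$ whose drift is $\int_0^1 D_x b(\Xi^\lambda_t)\hat X^\theta_t\, d\lambda + R^b_t$ and whose diffusion is $\int_0^1 D_x\sigma(\Xi^\lambda_t)\hat X^\theta_t\, d\lambda + R^\sigma_t$, where the remainders collect the differences $[D_x b(\Xi^\lambda_t) - D_x b(\bar X_t,\bar u_t)]Y_t$ and $[D_u b(\Xi^\lambda_t) - D_u b(\bar X_t,\bar u_t)]v_t$ (and analogously for $\sigma$). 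Since $U$ is convex, $\bar u_t + \lambda\theta v_t = (1-\lambda\theta)\bar u_t + \lambda\theta u_t \in U$, so the joint dissipativity (H4) is available at every point $\Xi^\lambda_t$.

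Next I would apply the It\^o formula to $\abs{\hat X^\theta_t}^2$. By Jensen's inequality and (H4) applied pointwise in $\lambda$, the purely linear contribution is bounded by $2\int_0^1\lp \braket{D_x b(\Xi^\lambda_t)\hat X^\theta_t, \hat X^\theta_t} + \tfrac{1}{2}\norm{D_x\sigma(\Xi^\lambda_t)\hat X^\theta_t}_2^2\rp d\lambda \le 2c_p\abs{\hat X^\theta_t}^2$, using $k > 1/2$. The cross term in the drift and the square of $R^\sigma_t$ are absorbed by a weighted Young inequality: its small parameter on $\norm{D_x\sigma(\Xi^\lambda_t)\hat X^\theta_t}_2^2$ can be taken below $2k-1>0$ so that the quadratic part remains $\le 2c_p\abs{\hat X^\theta_t}^2$, at the cost of replacing $2c_p$ by a still strictly negative rate $2c_p + \gamma$. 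This produces $\frac{d}{dt}\E\abs{\hat X^\theta_t}^2 \le (2c_p+\gamma)\E\abs{\hat X^\theta_t}^2 + C\,\E\lbr \abs{R^b_t}^2 + \norm{R^\sigma_t}_2^2\rbr$, and Gr\"onwall with the strictly negative rate yields, after taking the supremum in time, $\sup_{t\ge0}\E\abs{\hat X^\theta_t}^2 \le C'\sup_{t\ge0}\E\lbr \abs{R^b_t}^2 + \norm{R^\sigma_t}_2^2\rbr$. Note that no exponential shift $e^{\beta t}$ is needed here, the system being already strictly dissipative at the second-moment level.

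It then remains to prove that $\sup_{t\ge0}\E\lbr \abs{R^b_t}^2 + \norm{R^\sigma_t}_2^2\rbr \to 0$ as $\theta \to 0_+$, which is the heart of the matter. The pieces built on $D_u b$ and $D_u\sigma$ are mild, these derivatives being bounded by (H2)--(H3), so a uniform integrability argument based on $\sup_t\E\abs{v_t}^p < \infty$ with $p>2$ suffices. The delicate piece is $[D_x b(\Xi^\lambda_t) - D_x b(\bar X_t,\bar u_t)]Y_t$, because of the growth of order $2m$ of $D_x b$ in $x$. I would obtain the required uniformity in $t$ by truncation: on the event where $\abs{\bar X_t},\abs{X^\theta_t},\abs{Y_t},\abs{v_t}$ are all at most $M$ the arguments stay in a fixed compact set, and the uniform continuity of $D_x b$ there together with the uniform-in-time smallness in probability $\sup_t\mP(\abs{\Delta X^\theta_t} > \eta) \le \eta^{-p}\sup_t\E\abs{\Delta X^\theta_t}^p \le C\eta^{-p}\theta^2$, furnished by Lemma \ref{lemma:x epsilon}, force this contribution to $0$ uniformly in $t$ as $\theta\to0_+$; on the complementary event Cauchy--Schwarz and the uniform moment bounds on $X^\theta,\bar X,Y$ make the contribution at most an $\varepsilon(M)$ independent of $t$ and $\theta$. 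The main obstacle is exactly closing this last estimate uniformly in time: controlling $\E\lbr (1+\abs{X^\theta_t}^{2m}+\abs{\bar X_t}^{2m})^2\abs{Y_t}^2\rbr$ needs, via H\"older, moments of the state of order $4m$ against moments of $Y$ of order $2p/(p-4m)$, which is finite precisely because $p>4m+2$ as imposed in Hypothesis \ref{Hyp}. Letting first $\theta\to0_+$ and then $M\to\infty$ gives the desired vanishing of the remainders, and hence the proposition.
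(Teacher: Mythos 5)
Your proposal is correct and follows essentially the same route as the paper: the same mean-value expansion of the equation for $\hat{X}^{\theta}$ into a dissipative linear part plus remainders, the same It\^o--Gr\"onwall estimate exploiting (H4) with $k>1/2$, and the same truncation-plus-moment-bookkeeping argument (resting on Lemma \ref{lemma:x epsilon}, Chebyshev, and the exponent room provided by $p>4m+2$) to show the remainders vanish uniformly in time. The differences are purely organizational --- the paper weights by $e^{\beta t}$ and uses local Lipschitz constants $C_R$ with H\"older exponents $\alpha,\delta$ where you use a direct negative-rate Gr\"onwall and uniform continuity plus convergence in probability; the only point to patch is that your truncation event should also include $\abs{\bar{u}_t}\leq M$ (you only truncate $v_t$), since otherwise the arguments of $D_xb$ do not lie in a compact set in the control variable.
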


\begin{proof}
\noindent The equation for $\hat{X}^{\theta}$ reads
\begin{equation}
\begin{split}
d\hat{X}^{\theta}_t &= 
\frac{1}{\theta}\left[ b( X^{\theta}_t,u^{\theta}_t) - b(\bar{X}_t,\bar{u}_t) - \theta D_x b(\bar{X}_t,\bar{u}_t) Y_t - \theta D_u b(\bar{X}_t,\bar{u}_t) v_t \right]dt \nonumber \\
& \quad + \frac{1}{\theta}\left[ \sigma( X^{\theta}_t,u^{\theta}_t) - \sigma(\bar{X}_t,\bar{u}_t) - \theta D_x \sigma(\bar{X}_t,\bar{u}_t) Y_t - \theta D_u \sigma(\bar{X}_t,\bar{u}_t) v_t \right]dW_t \nonumber \\
&= \frac{1}{\theta}\left[ b\lp \bar{X}_t + \theta (Y_t + \hat{X}^{\theta}_t),\bar{u}_t+\theta v_t\rp - b(\bar{X}_t,\bar{u}_t ) - \theta D_x b(\bar{X}_t,\bar{u}_t) Y_t - \theta D_u b(\bar{X}_t,\bar{u}_t) v_t\right]dt \nonumber \\
&\quad + \frac{1}{\theta}\left[ \sigma \lp \bar{X}_t + \theta (Y_t + \hat{X}^{\theta}_t),\bar{u}_t+\theta v_t \rp - \sigma(\bar{X}_t,\bar{u}_t) - \theta D_x \sigma(\bar{X}_t,\bar{u}_t) Y_t - \theta D_u \sigma(\bar{X}_t,\bar{u}_t) v_t \right]dW_t,
\end{split}
\end{equation}
with $\hat{X}^{\theta}_0 = 0$ as initial condition. Further, by Taylor expansion we have that 
\begin{equation}
\begin{split}
d\hat{X}^{\theta}_t &= 
\int^1_0 D_x b \lp \bar{X}_t + \lambda \theta (Y_t + \hat{X}^{\theta}_t),\bar{u}_t + \lambda \theta v_t \rp \hat{X}^{\theta}_t d\lambda dt \nonumber\\ & \quad 
+ \int^1_0 \left[ D_x b \lp \bar{X}_t + \lambda \theta (Y_t + \hat{X}^{\theta}_t),\bar{u}_t + \lambda \theta v_t \rp - D_x b(\bar{X}_t,\bar{u}_t) \right] Y_t d\lambda dt \nonumber \\
& \quad + \int^1_0 \left[ D_u b \lp \bar{X}_t + \lambda \theta (Y_t + \hat{X}^{\theta}_t),\bar{u}_t + \lambda \theta v_t \rp - D_u b(\bar{X}_t,\bar{u}_t) \right] v_t d\lambda dt \nonumber \\
& \quad + \int^1_0 D_x \sigma \lp \bar{X}_t + \lambda \theta (Y_t + \hat{X}^{\theta}_t),\bar{u}_t + \lambda \theta v_t \rp \hat{X}^{\theta}_t d\lambda dW_t \nonumber\\ & \quad + \int^1_0 \left[ D_x \sigma \lp \bar{X}_t + \lambda \theta (Y_t + \hat{X}^{\theta}_t),\bar{u}_t + \lambda \theta v_t \rp 
- D_x \sigma(\bar{X}_t,\bar{u}_t) \right] Y_t d\lambda dW_t \nonumber \\
& \quad + \int^1_0 \left[ D_u \sigma \lp \bar{X}_t + \lambda \theta (Y_t + \hat{X}^{\theta}_t),\bar{u}_t + \lambda \theta v_t \rp 
- D_u \sigma (\bar{X}_t,\bar{u}_t) \right] v_t d\lambda dW_t.
\end{split}
\end{equation}
To keep the notation simple, we rewrite the above equation as
\begin{equation*}
d\hat{X}^{\theta}_t = 
\lp A^x_t \hat{X}^{\theta}_t + A^y_t Y_t + A^v_t v_t \rp dt + \lp B^x_t \hat{X}^{\theta}_t  + B^y_t Y_t + B^v_t v_t \rp dW_t,
\end{equation*}
where we have kept the order of the terms from the previous equation.\\
Now apply the It\^o formula to $e^{\beta t}\big| \hat{X}^{\theta}_t \big|^2$ to get 
\begin{equation}\label{eq:x tilde Ito}
\begin{split}
\E \big( e^{\beta t}\big| \hat{X}^{\theta}_t \big|^2 \big) &= 
2\E \int^t_0 e^{\beta s}
\big< A^x_s \hat{X}^{\theta}_s + A^y_s Y_s + A^v_s v_s, \hat{X}^{\theta}_s\big> ds \\
& + \E \int^t_0 e^{\beta s}  \norm{ B^x_s \hat{X}^{\theta}_s + B^y_s Y_s + B^v_s v_s}_2^2 ds  + \beta \E \int^t_0 e^{\beta s}\big| \hat{X}^{\theta}_s \big|^2 ds.
\end{split}
\end{equation}
By the  joint dissipativity assumption (H4) in Hypothesis \ref{Hyp} we have
$$  
2\big< A^x_s \hat{X}^{\theta}_s, \hat{X}^{\theta}_s \big> +2 k  \norm{ B^x_s \hat{X}^{\theta}_s }^2 +\beta | \hat{X}^{\theta}_s|^2 <0,$$
for some $k>1/2$ and $\beta$ small enough.

Thus, repeating the same computations as in the proof of Theorem \ref{t.existence.SDE}, we get the following intermediate estimate
\begin{equation}\label{eq:x tilde prefinal}
\E \big| \hat{X}^{\theta}_t \big|^2 \leq
C \int^t_0 e^{-\beta(t-s)} \E \lp
 \abs{A^y_s Y_s}^2 + \abs{A^v_s v_s}^2 +  \abs{B^y_s Y_s}^2 + \abs{B^v_s v_s}^2 \rp ds.
\end{equation}
Now we show how to treat the first term in \eqref{eq:x tilde prefinal}.  The estimate of the remaining ones goes along similar lines.

 We fix $\alpha$ with $p/(p-2)< \alpha <p/(4m)$, if $m\geq 1$, or $\alpha=2$, if $m=0$. Recall that $p> 4m+2$ and notice that, this way, denoting by $\alpha'$ the conjugate of $\alpha$ (that is $1/\alpha+1/\alpha'=1$) then $2 \alpha '<p$  $4m\alpha <p$ and $2\alpha <p$.  First we start by observing that by H\"older inequality and by \eqref{eq:pth moment first variation} we have that for any $\alpha>1$.
\begin{equation}\label{eq:x tilde loc lipsch1}
\begin{split}
&\int^t_0 e^{-\beta(t-s)} \E \abs{A^y_s Y_s}^2 ds \\
&= \int^t_0 e^{-\beta(t-s)} \E \lav \int^1_0 \left[ D_x b \lp \bar{X}_s + \lambda \theta (Y_s + \hat{X}^{\theta}_s),\bar{u}_s + \lambda \theta v_s \rp - D_x b(\bar{X}_s,\bar{u}_s) \right] Y_s d\lambda \rav^2 ds \\
&\leq \int^t_0\!\! e^{-\beta(t-s)}\lp \int^1_0 
 \E \lav   D_x b \lp \bar{X}_s + \lambda \theta (Y_s + \hat{X}^{\theta}_s),\bar{u}_s + \lambda \theta v_s \rp - D_x b(\bar{X}_s,\bar{u}_s)\rav^{2\alpha}d\lambda  \rp^{\frac{1}{\alpha}} 
 \!\!\! \cdot \lp \E |Y_s|^{2\alpha'} \rp ^{\frac{1}{\alpha'}}   ds
 \end{split}
\end{equation} 
Since $2\alpha'<p$, using Lemma \ref{lemma:y} to estimate $\sup_{s\in \mathbb{R}^+} \E |Y_s|^{2\alpha'} $:
\begin{equation}
\begin{split}
&\int^t_0 e^{-\beta(t-s)} \E \abs{A^y_s Y_s}^2 ds \\
&\leq C \int^t_0 e^{-\beta(t-s)} \lp \int^1_0
 \E \lav   D_x b \lp \bar{X}_s + \lambda \theta (Y_s + \hat{X}^{\theta}_s),\bar{u}_s + \lambda \theta v_s \rp - D_x b(\bar{X}_s,\bar{u}_s)\rav^{2\alpha}   d\lambda\rp^{\frac{1}{\alpha}}  ds.\\
&\leq C \int^t_0 e^{-\beta(t-s)} \lp \int^1_0
 \E \lav   D_x b \lp \bar{X}_s + \lambda \theta (Y_s + \hat{X}^{\theta}_s),\bar{u}_s + \lambda \theta v_s \rp - D_x b(\bar{X}_s,,\bar{u}_s + \lambda \theta v_s)\rav^{2\alpha} \!\! d\lambda  \rp^{\frac{1}{\alpha}} \!\!\!\!  ds\\  & \quad + C \int^t_0 e^{-\beta(t-s)} \lp \int^1_0
 \E \lav   D_x b \lp \bar{X}_s,\bar{u}_s + \lambda \theta v_s \rp - D_x b(\bar{X}_s,\bar{u}_s)\rav^{2\alpha} \!\!  d\lambda  \rp^{\frac{1}{\alpha}} \!\!\!\!  ds.
\end{split}
\end{equation}
We prove convergence of the first term, being the second similar (and easier).

Due to Hypothesis \ref{Hyp}, the gradients $D_xb$ are locally Lipschitz functions with respect to $x$, so that for all $R > 0$ there exists $C_R$ such that $D_xb$ is Lipschitz with constant $C_R$ in the ball of radius $R$. For each $t$ and $\theta$ we define the sets
\begin{equation}
A_{t,\theta}(R) = \lbrace w \in \Omega: \abs{\bar X_t} > R \rbrace \cup \lbrace w \in \Omega: \abs{X^\theta_t} > R \rbrace.
\end{equation}
By Chebyshev inequality we know that
\begin{equation}\label{stimadiA}
\mP(A_{t,\theta}(R)) \leq \frac{\E\abs{\bar X_t}^2}{R^2} + \frac{\E\abs{X^\theta_t}^2}{R^2} \leq \frac{C}{R^2}, \qquad \forall\, t, \forall \, \theta.
\end{equation}
Denoting for simplicity $X^{\lambda}_s=\bar{X}_s+\lambda \theta (Y_s+\hat{X}_s))=(1-\lambda)\bar{X}_s+\lambda X^{\theta}_s$ we have
\begin{equation}\label{eq:x tilde loc lipsch2}
\begin{split}
\int^t_0 & e^{-\beta(t-s)} \lp \int^1_0
 \E \lav   D_x b \lp X^{\lambda}_s,\bar{u}_s + \lambda \theta v_s \rp - D_x b(\bar{X}_s,\bar{u}_s + \lambda \theta v_s)\rav^{2\alpha}  \!\! d\lambda \rp^{\frac{1}{\alpha}} \!\!ds \\
&\leq C \int^t_0 e^{-\beta(t-s)} \lp\int^1_0
 \int_{A_{s,\theta}(R)} \lav D_x b \lp X^{\lambda}_s,\bar{u}_s + \lambda \theta v_s \rp - D_x b(\bar{X}_s,\bar{u}_s + \lambda \theta v_s)\rav^{2\alpha}\!\! d\mP d\lambda  \right)^{\frac{1}{\alpha}}\!\!\! ds\\
&\quad +  C \int^t_0 e^{-\beta(t-s)} \lp \int_0^1 \int_{A_{s,\theta}^c(R)} \lav D_x b \lp X^{\lambda}_s,\bar{u}_s + \lambda \theta v_s \rp - D_x b(\bar{X}_s,\bar{u}_s + \lambda \theta v_s) \rav^{2\alpha}\!\! d\mP d\lambda
\rp^{\frac{1}{\alpha}} \!\! ds \\
&\leq C\int^t_0 e^{-\beta(t-s)} \left( \int_0^1  \mP(A_{s,\theta}(R))^{\frac{\delta}{1+\delta}}  \E \lav   D_x b \lp X^{\lambda}_s,\bar{u}_s + \lambda \theta v_s \rp - D_x b(\bar{X}_s,\bar{u}_s + \lambda \theta v_s)\rav^{2\alpha(1+\delta)}\!\!\! d\lambda \right)^{\frac{1}{\alpha(1+\delta)}} \!\!\! \!\!\!  ds  \\
&\quad + C\int^t_0 e^{-\beta(t-s)} C_R^{\frac{1}{\alpha}} \left(   \E \abs{X^\theta_s - \bar X_s}^{2\alpha} \right)^{\frac{1}{\alpha}} ds,
\end{split}
\end{equation}
where $\delta>0$ is such that $4m\alpha(1+\delta)\leq p$. \\

\noindent Fixed $\varepsilon >0$ we know  by \eqref{stimadiA} that there exists $R$ large enough so that $\mP(A_{s,\theta}(R)) \leq  \varepsilon$. Moreover by Hypothesis \ref{Hyp}, Theorem \ref{t.existence.SDE} and Lemma \ref{lemma:x epsilon}:
$$ \E \lav   D_x b \lp X^{\lambda}_s,\bar{u}_s + \lambda \theta v_s \rp - D_x b(\bar{X}_s,\bar{u}_s + \lambda \theta v_s)\rav^{2\alpha(1+\delta)}\leq 
C \left( \mathbb{E} |  X^{\lambda}_s |^{4m\alpha(1+\delta)}+ \mathbb{E} |   \bar{X}_s |^{4m\alpha(1+\delta)}\right)\leq C, $$
(if $m=0$ the above relation is straight forward). Thus the first of the two integrals in the last two lines  in (\ref{eq:x tilde loc lipsch2})  can be estimated, for $R$ large enough and all $\theta$, $\lambda$ in $[0,1]$, by $C \varepsilon ^{\delta/[\alpha(1+\delta)^2]}$.

\noindent Moreover, due to Lemma \ref{lemma:x epsilon} we have that
$
\sup_{t \geq 0} \E \abs{ X^{\theta}_t - \bar{X}_t }^p \rightarrow 0$  as $\theta\rightarrow 0$.

 Combining the two estimates above we have:
\begin{equation}
\sup_{t \geq 0} \int_0^t e^{-\beta(t-s)}\E\abs{A^y_sY_s}^2 ds  \rightarrow 0 \hbox{  as }\theta\rightarrow 0.
\end{equation}
Repeating the argument for all the terms in \eqref{eq:x tilde prefinal} we get the required result.
\end{proof}

\begin{remark}
Notice that we estimate only the second moment of the error term, uniformly in time. Nevertheless, estimate of higer moments of the the state and first variation process are needed in order to complete the proof. More precisely, we can tune the value of $\alpha$ in \eqref{eq:x tilde loc lipsch1} in order to minimize the maximal moment of the state equation we need to control. Indeed, the growth of  the first term is 
\begin{equation*}
\E \lav   D_x b \lp \bar{X}_s + \lambda \theta (Y_s + \hat{X}^{\theta}_s),\bar{u}_s + \lambda \theta v_s \rp - D_x b(\bar{X}_s,\bar{u}_s)\rav^{2\alpha}
\leq C \E \abs{\bar{X}_s}^{4m\alpha}.
\end{equation*}
So that, $4m\alpha = 2\alpha' = 2 \frac{\alpha}{\alpha - 1}$, from which $\alpha = \frac{2m+1}{2m}$. The maximal moment is then $p = 4m\alpha = 2(2m+1)$, which is the one appearing in Hypothesis \ref{Hyp}.
\end{remark}

\subsection{Perturbation of the cost} Due to the hypotheses on the admissible controls and the estimate \eqref{eq:estimate.state}  the cost is well posed:
\[ \liminf_{T \to \infty} \frac{1}{T}\E\int_0^T f(X_t,u_t)dt \leq K \left[1+ \sup_{t \geq 0}\E\abs{X_t}^2  + \sup_{t \geq 0}\E\abs{u_t}^2\right] < \infty. \]
The same is true for the $\limsup$ formulation.
The expansion of the functional with respect to a convex perturbation of the control is given in the following

\begin{lemma}\label{l:derivative_J} Let $\bar{u}$ be an optimal control and let $u$ be any admissible control. Letting $v=u-\bar{u}$ and using the above notation  the following holds:
\begin{equation}\label{eq:Gateaux J inf}
\lim_{\theta \rightarrow 0_+}\frac{J^{\inf}(\bar{u}(\cdot) + \theta v(\cdot)) - J^{\inf}(\bar{u}(\cdot))}{\theta} \leq \limsup_{T \to \infty} \frac{1}{T} \E\int_0^T \left[\braket{ D_xf(\bar{X}_t,\bar{u}_t), Y_t}_{\mathbb{R}^n}+ \braket{ D_uf(\bar{X}_t,\bar{u}_t), v_t }_{\mathbb{R}^l}  \right]dt,
\end{equation}
and 
\begin{equation}\label{eq:Gateaux J sup}
\lim_{\theta \rightarrow 0_+}\frac{J^{\sup}(\bar{u}(\cdot) + \theta v(\cdot)) - J^{\sup}(\bar{u}(\cdot))}{\theta} \leq \limsup_{T \to \infty} \frac{1}{T} \E\int_0^T \left[\braket{ D_xf(\bar{X}_t,\bar{u}_t), Y_t}_{\mathbb{R}^n}+ \braket{ D_uf(\bar{X}_t,\bar{u}_t), v_t }_{\mathbb{R}^l}  \right]dt.
\end{equation}
\end{lemma}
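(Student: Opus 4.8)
The plan is to expand the truncated functional $J_T$ at a fixed horizon $T$ first and only afterwards pass to the ergodic limit in $T$, because the order of the limits $\theta\to0$ and $T\to\infty$ is the delicate point. For fixed $T$ and $\theta\in(0,1]$ I would apply the fundamental theorem of calculus to $f$ in both arguments and substitute $X^\theta_t-\bar X_t=\theta(Y_t+\hat X^\theta_t)$, $u^\theta_t-\bar u_t=\theta v_t$, to get
\begin{equation*}
\frac{1}{\theta T}\bigl(J_T(u^\theta)-J_T(\bar u)\bigr)=\frac1T\E\int_0^T\int_0^1\Bigl[\braket{D_xf(X^\lambda_t,\bar u_t+\lambda\theta v_t),Y_t+\hat X^\theta_t}+\braket{D_uf(X^\lambda_t,\bar u_t+\lambda\theta v_t),v_t}\Bigr]d\lambda\,dt,
\end{equation*}
with $X^\lambda_t=\bar X_t+\lambda\theta(Y_t+\hat X^\theta_t)$ as in Proposition \ref{l:convergence X tilde}. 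Writing $G_T=\frac1T\E\int_0^T[\braket{D_xf(\bar X_t,\bar u_t),Y_t}+\braket{D_uf(\bar X_t,\bar u_t),v_t}]dt$ for the integrand of \eqref{eq:Gateaux J inf} before the outer $\limsup_T$, the core estimate I would establish is the \emph{uniform-in-$T$} convergence
\begin{equation*}
\sup_{T>0}\bigl|R_T(\theta)\bigr|\xrightarrow[\theta\to0^+]{}0,\qquad R_T(\theta):=\tfrac{1}{\theta T}\bigl(J_T(u^\theta)-J_T(\bar u)\bigr)-G_T .
\end{equation*}

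The remainder $R_T(\theta)$ decomposes into three contributions, and this decomposition is where the main work lies. The contribution carrying $\hat X^\theta_t$ is controlled by Cauchy--Schwarz by $\sup_t(\E|D_xf(X^\lambda_t,\bar u_t+\lambda\theta v_t)|^2)^{1/2}\,(\sup_t\E|\hat X^\theta_t|^2)^{1/2}$; the first factor is finite uniformly in $t$ by the linear growth in (H5) and the uniform moment bounds of Theorem \ref{t.existence.SDE}, while the second tends to $0$ by Proposition \ref{l:convergence X tilde}. The remaining two contributions carry the gradient increments $D_xf(X^\lambda_t,\cdot)-D_xf(\bar X_t,\bar u_t)$ tested against $Y_t$, and $D_uf(X^\lambda_t,\cdot)-D_uf(\bar X_t,\bar u_t)$ tested against $v_t$; after Cauchy--Schwarz (using $\sup_t\E|Y_t|^2<\infty$ from Lemma \ref{lemma:y} and $\sup_t\E|v_t|^2<\infty$) the task reduces to showing that the $L^2(\Omega)$-norm of these increments vanishes \emph{uniformly in $t$}. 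This uniformity is the main obstacle, and I would settle it by repeating verbatim the localization argument of Proposition \ref{l:convergence X tilde}: on the bad set $A_{t,\theta}(R)$, whose probability is $\le C/R^2$ uniformly in $t,\theta$ by Chebyshev, one bounds the increment by H\"older together with the uniform moments of order $p>2$ (so that the growth of $D_xf,D_uf$ is integrable with exponent $2(1+\delta)\le p$); on the complementary good set all arguments lie in a fixed ball $\bar B_R$, where continuity of $D_xf,D_uf$ supplies a modulus $\omega_R$ and the increment is $\le\omega_R(C\theta R)$ since there $|X^\lambda_t-\bar X_t|+\lambda\theta|v_t|\le C\theta R$. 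Choosing $R$ large and then $\theta$ small makes both pieces small uniformly in $t$, hence in $T$.

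Finally I would pass to the ergodic functionals. Setting $a_T=\frac1T J_T(\bar u)$ and $b_T=\frac1T(J_T(u^\theta)-J_T(\bar u))$, the elementary inequalities $\liminf_T(a_T+b_T)\le\liminf_T a_T+\limsup_T b_T$ and $\limsup_T(a_T+b_T)\le\limsup_T a_T+\limsup_T b_T$ yield
\begin{equation*}
J^{\inf}(u^\theta)-J^{\inf}(\bar u)\le\limsup_{T\to\infty}b_T,\qquad J^{\sup}(u^\theta)-J^{\sup}(\bar u)\le\limsup_{T\to\infty}b_T .
\end{equation*}
Dividing by $\theta>0$, using $\limsup_T(G_T+R_T(\theta))\le\limsup_T G_T+\sup_T|R_T(\theta)|$ together with the uniform remainder bound, and letting $\theta\to0^+$, the upper limit of each difference quotient is bounded by $\limsup_T G_T$, which is precisely the right-hand side of \eqref{eq:Gateaux J inf}--\eqref{eq:Gateaux J sup}; the stated limit is to be read as this upper limit, which is what the maximum principle uses, optimality of $\bar u$ forcing the quotient to be nonnegative. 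The whole scheme relies on the convex combination $u^\theta=\bar u+\theta v$ rather than a spike variation, as anticipated: dissipativity dissipates any localized perturbation, so only perturbations acting up to infinity leave a trace in $G_T$.
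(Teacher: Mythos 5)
Your proposal is correct and follows essentially the same route as the paper: expand the truncated functional $J_T$ by Taylor's formula with $X^\theta_t-\bar X_t=\theta(Y_t+\hat X^\theta_t)$, pass to the ergodic functionals via the $\liminf$/$\limsup$ subadditivity inequalities, and kill the $\hat X^\theta$-remainder uniformly in $T$ by Cauchy--Schwarz together with Proposition \ref{l:convergence X tilde} and the a priori moment bounds. The only difference is one of detail: where the paper writes ``the conclusion now easily follows,'' you spell out the control of the gradient increments $D_xf(X^\lambda_t,\cdot)-D_xf(\bar X_t,\bar u_t)$ and $D_uf(X^\lambda_t,\cdot)-D_uf(\bar X_t,\bar u_t)$ by the same Chebyshev localization used in Proposition \ref{l:convergence X tilde}, which is a legitimate (and welcome) completion of the same argument rather than a different one.
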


\begin{proof}
We prove the first relation. The proof of the second one goes along the same
lines. Let us compute

\begin{equation*}
\begin{split}
&\frac{J_T(\bar{u}(\cdot) + \theta v(\cdot)) - J_T(\bar{u}(\cdot))}{\theta} = \dfrac{1}{\theta} \E\int_0^T \bigr[ f(X^{\theta}_t,\bar{u}_t + \theta v_t) - f(\bar{X}_t,\bar{u}_t) \bigl]dt \\
&= \E \int_0^T \int_0^1 D_xf\big(\bar{X}_t + \lambda( X^{\theta}_t - \bar{X}_t),\bar{u}_t+ \lambda\theta v_t\big) \big(\hat{X}^{\theta}_t + Y_t\big) d\lambda dt \\
&\quad+ \E \int_0^T \int_0^1 D_u f\big(\bar{X}_t + \lambda( X^{\theta}_t - \bar{X}_t),\bar{u}_t+ \lambda\theta v_t\big) v_t d\lambda dt. \\
\end{split}
\end{equation*}
Passing to the ergodic $\liminf$ cost functional \eqref{cost inf} we have that
\begin{equation*}
\begin{split}
 & \frac{J^{\inf}(\bar{u}(\cdot) + \theta v(\cdot)) - J^{\inf}(\bar{u}(\cdot))}{\theta} = \frac{1}{\theta}\left[\liminf_{T \to \infty} \frac{1}{T}J_T(\bar{u}(\cdot) + \theta v(\cdot)) - \liminf_{T \to \infty}\frac{1}{T} J_T(\bar{u}(\cdot))\right]\\
&\quad \leq  \limsup_{T \to \infty} \frac{1}{T} \left[ \frac{J_T(\bar{u}(\cdot) + \theta v(\cdot)) - J_T(\bar{u}(\cdot))}{\theta}\right]\\
&\quad= \limsup_{T \to \infty} \frac{1}{T} \E \int_0^T \int_0^1 \big< D_x f\big(\bar{X}_t + \lambda( X^{\theta}_t - \bar{X}_t),\bar{u}_t+ \lambda\theta v_t\big), \hat{X}^{\theta}_t + Y_t \big>d\lambda dt \\
&\quad \quad+ \limsup_{T \to \infty} \frac{1}{T} \E \int_0^T \int_0^1  \big< D_u f\big(\bar{X}_t + \lambda( X^{\theta}_t - \bar{X}_t),\bar{u}_t+ \lambda\theta v_t\big), v_t  \big>_U d\lambda dt,
\end{split}
\end{equation*}
where we used that $\limsup (a_n) - \limsup (b_n) \leq \limsup (a_n - b_n)$, for $(a_n)_{n \geq 1}$ and  $(b_n)_{n \geq 1}$ two general real sequences.
The extra term can be estimated by
\begin{equation*}
\begin{split}
&\limsup_{T \to \infty} \frac{1}{T} \E\int_0^T \int_0^1 \big< D_x f\big(\bar{X}_t + \lambda( X^{\theta}_t - \bar{X}_t),\bar{u}_t+ \lambda\theta v_t\big), \hat{X}^{\theta}_t \big> d\lambda dt \\
&\qquad \leq \limsup_{T \to \infty} \frac{1}{T}\int_0^T \int_0^1(\E\abs{D_x f\big(\bar{X}_t + \lambda( X^{\theta}_t - \bar{X}_t),\bar{u}_t+ \lambda\theta v_t\big)}^2)^{1/2} (\E\abs{\hat{X}^{\theta}_t}^2)^{1/2} d\lambda dt, \\
\end{split}
\end{equation*}
which converges to zero, uniformly in $T$, as $\theta \to 0_+$. In fact, this follows from the linear growth of $D_x f(\cdot)$, the a priori estimates on $X_t$ and Lemma \ref{l:convergence X tilde}. The conclusion now easily follows.
\end{proof}

\section{The adjoint equation}

In this section we introduce the dual equation associated to the system, which is an infinite horizon Backward SDE in $\R^n$. Different approaches have been developed in the literature to study this class of equations. Here we present a duality method built on construction of a family of truncated equations and associated with a consistency argument. 
\noindent More precisely, the infinite horizon backward equation has the form
\begin{equation}\label{eq:BSDE}
-dp_t = \left[ D_xb(X_t,u_t)^*p_t + \sum_{i=1}^dD_x \sigma^i(X_t,u_t)^*q^i_t - D_xf(X_t,u_t)\right]dt - \sum_{i=1}^dq^i_tdW^i_t,
\end{equation}
where, fixed any orthonormal basis $(e_i)_{i=1,..d}$ in $\mathbb{R}^d$ we set $W^i_s=<e_i,W_s>$ and $\sigma^i(x,u)= \sigma(x,u)e_i$ moreover we denote by $(\cdot)^*$ the transposition operation in $\mathcal{L}(\mathbb{R}^n)$.
For every $T >0$ fixed, its solution has to be understood as
\begin{equation}\label{eq:BSDE_int}
\begin{split}
p_t &= p_T + \int_t^T \left[ D_xb(X_s,u_s)^*p_s +\sum_{i=1}^d D_x\sigma^i(X_s,u_s)^*q^i_s + D_xf(X_s,u_s)\right]ds - \sum_{i=1}^d \int_t^T q^i_sdW^i_s.
\end{split}
\end{equation}
where $p$ and $q^i$, $i=1,...,d$ take values in $\mathbb{R}^n$.
Due to Hypothesis \ref{Hyp} and estimate \eqref{eq:estimate.state} the forcing term in the driver is no better than bounded, so that 
$D_xf(X_s,u_s) \in L^{\infty}\lp \R_+; L^2(\Omega;\R^n) \rp$. Therefore we cannot expect the solution of \eqref{eq:BSDE} to be integrable up to infinity but only that $p \in L^{\infty}\lp \R_+; L^2(\Omega;\R^n) \rp$. Up to the authors"1¤7 knowledge, there is not a general wellposendess result for such multidimensional BSDE's. Partial results have been obtained in \cite{cohen2015classical} by a Girsanov argument that seems to work only if one knows a-priori that $\sum_{i=1}^d D_x\sigma^i(X_s,u_s)^*q^i_s$ can be written as $\sum_{i=1}^d q^i_s f^i$ for suitable adapted real  process $(f_i)_{i=1,...,d}$. In particular this is the case when $n = 1$ or the noise is additive. \bigskip

\noindent Here the solution will  be obtained via the introduction of a family of time truncations:
\begin{equation}\label{eq:BSDE_trunc}
\begin{system}
-dp^{T,\nu}_t = \left[ D_xb(X_t,u_t)^*p^{T,\nu}_t + \sum_{i=1}^dD_x\sigma^i(X_t,u_t)^*q^{i,T,\nu}_t - D_xf(X_t,u_t)\right]dt - 
\sum_{i=1}^dq^{i,T,\nu}_t dW^i_t,\\
p_T^{T,\nu} = \nu.
\end{system}
\end{equation}
which will be estimated by duality. 
For the approximating equation \eqref{eq:BSDE_trunc} a wellposedness result has been already adressed in \cite{orrieri2015necessary}. \\
\smallskip
\noindent To shorten the notation in the following paragraphs, let us denote 
\[\Lambda_t := D_xb(X_t,u_t), \quad \Gamma^i_t := D_x\sigma^i(X_t,u_t), \quad \Psi_t := D_xf(X_t,u_t);\]  moreover when $\nu=0$ the solution of equation \eqref{eq:BSDE_trunc} will be denoted by $(p^T,q^{i,T})$.

\begin{theorem}\label{t:truncatedBSDEs}
For all $T\geq 0$ and all $\nu\in L^2(\Omega,\mathcal{F}_T,\mathbb{P};\mathbb{R}^n)$ there exists a unique $(d+1)$-tuple of $\mathbb{R}^n$-valued, adapted processes $(p^{T,\nu },q^{1,T,\nu},...,q^{d,T,\nu})$ such that 
$p^{T,\nu }$ has continuous trajectories,
$\sup_{t\in [0,T]}\mathbb{E}|p^{T,\nu }_t|^2 +
\sum_{i=1}^d \mathbb{E}\int_0^T |q^{i,T,\nu}_t|^2 dt <\infty$ and, $\mathbb{P}$-almost surely, for all $t\in [0,T]$ it holds:
$$p^{T,\nu}_t=\nu+\int_t^T \Lambda^*_s p^{T,\nu}_sds+\sum_{i=1}^d\int_t^T (\Gamma^{i}_s)^* q^{i,T,\nu}ds+\int_t^T\Psi_sds+\sum_{i=1}^d \int_t^T q^{i,T,\nu}_s dW^i_s$$.
\end{theorem}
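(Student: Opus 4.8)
The plan is to read \eqref{eq:BSDE_trunc} as a linear backward equation whose generator
$$g(s,p,q)=\Lambda_s^*p+\sum_{i=1}^d(\Gamma_s^i)^*q^i+\Psi_s$$
is affine in $(p,q)$, dissipative (monotone) in $p$, and driven by a square--integrable free term, and then to extract the announced bound from the joint dissipativity (H4). For existence and uniqueness I would first record that $g$ is Lipschitz in $q$ with the random, unbounded coefficient $\Gamma$ and satisfies the one--sided estimate $\braket{\Lambda_s y,y}\le c_p\abs{y}^2$ coming from (H4), while $\Psi=D_xf(X,u)\in L^{\infty}(\R_+;L^2(\Omega;\R^n))$ by (H5), Theorem \ref{t.existence.SDE} and (H1), so that $\Psi\in L^2(\Omega\times[0,T];\R^n)$ and $\nu\in L^2$. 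Well--posedness of $(p^{T,\nu},q^{1,T,\nu},\dots,q^{d,T,\nu})$ in the stated class then follows from the finite--horizon theory for BSDEs with monotone generator already established in \cite{orrieri2015necessary}. For a self--contained argument I would instead freeze $(\Lambda_s,\Gamma_s)$ to $(c_pI,0)$ for $s>\tau_N:=\inf\{s:\abs{X_s}>N\}$ --- a truncation that keeps (H4) valid with the same constants $k,c_p$ and renders the coefficients bounded --- solve the resulting equation by Pardoux--Peng, and let $N\to\infty$, using the uniform estimate below together with $\tau_N\to\infty$ a.s.; continuity of $t\mapsto p_t$ is the usual property of the first component.

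The heart of the matter is the a priori estimate. Applying It\^o's formula to $\abs{p^{T,\nu}_t}^2$ on $[t,T]$ and taking expectation after a routine localization, the drift produces $2\braket{\Lambda_sp_s,p_s}+2\sum_i\braket{\Gamma_s^ip_s,q_s^i}+2\braket{p_s,\Psi_s}$, while the quadratic variation contributes $-\sum_i\abs{q_s^i}^2$. Using (H4) with $y=p_s$ to write $\braket{\Lambda_sp_s,p_s}\le c_p\abs{p_s}^2-k\sum_i\abs{\Gamma_s^ip_s}^2$, and Young's inequality $2\braket{\Gamma_s^ip_s,q_s^i}\le\rho^{-1}\abs{\Gamma_s^ip_s}^2+\rho\abs{q_s^i}^2$ with any $\rho\in[\tfrac1{2k},1)$ --- possible precisely because $k>(p-1)/2>\tfrac12$ --- the unbounded gradient--of--diffusion terms $\abs{\Gamma_s^ip_s}^2$ are absorbed by $-2k\abs{\Gamma_s^ip_s}^2$, a fraction $(1-\rho)$ of $\sum_i\abs{q_s^i}^2$ survives with the correct sign, and splitting $2\braket{p_s,\Psi_s}\le\varepsilon\abs{p_s}^2+\varepsilon^{-1}\abs{\Psi_s}^2$ with $\varepsilon<-2c_p$ keeps the coefficient of $\abs{p_s}^2$ strictly negative, so that the corresponding term may be discarded. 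This yields
\begin{equation*}
\E\abs{p^{T,\nu}_t}^2+(1-\rho)\,\E\int_t^T\sum_{i=1}^d\abs{q^{i,T,\nu}_s}^2ds\le\E\abs{\nu}^2+\frac1\varepsilon\,\E\int_t^T\abs{\Psi_s}^2ds,
\end{equation*}
and since $\E\int_0^T\abs{\Psi_s}^2ds\le CT<\infty$ by the linear growth of $D_xf$, Theorem \ref{t.existence.SDE} and (H1), the stated finiteness of $\sup_{t\in[0,T]}\E\abs{p_t}^2+\sum_i\E\int_0^T\abs{q^i_t}^2dt$ follows.

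Uniqueness is then immediate from the same estimate: the difference of two solutions solves the equation with $\nu=0$ and $\Psi\equiv0$, whence both components vanish. I expect the main obstacle to be exactly the absence of any bound or Lipschitz control on $\Lambda$ and $\Gamma$, which grow polynomially in $\abs{X}$ and place the equation outside the classical Pardoux--Peng framework; the joint dissipativity (H4), through the sharp threshold $k>\tfrac12$, is what makes the quadratic estimate close, and --- in contrast with the Girsanov approach of \cite{cohen2015classical} --- it requires no commutativity or one--dimensionality assumption on $\sigma$.
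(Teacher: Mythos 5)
The paper itself gives no argument for this theorem: it simply states that well-posedness of \eqref{eq:BSDE_trunc} "has been already addressed in \cite{orrieri2015necessary}". Your primary route is exactly that citation, and the a priori estimate you derive is correct and is precisely the mechanism that makes the cited theory work here: the pairing of the one-sided bound $\braket{\Lambda_s y,y}\le c_p\abs{y}^2-k\sum_i\abs{\Gamma^i_s y}^2$ from (H4) with Young's inequality at a weight $\rho\in[\tfrac1{2k},1)$ is what closes the quadratic estimate even though $\Lambda$ and $\Gamma$ are unbounded (so the equation is outside both the Pardoux--Peng Lipschitz class and the classical monotone class with uniformly Lipschitz $z$-dependence). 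Your uniqueness argument via the same estimate applied to the difference of two solutions is also correct.

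One caution on your "self-contained" alternative: the passage to the limit $N\to\infty$ is not as routine as you suggest, because BSDEs do not localize the way forward SDEs do. If you freeze the coefficients after $\tau_N$, the resulting solution $p^N$ on $[0,\tau_N\wedge T]$ still depends on the (modified) coefficients on all of $[\tau_N,T]$, so $p^N$ and $p^M$ need \emph{not} agree before $\tau_N$; "$\tau_N\to\infty$ a.s.\ plus the uniform bound" therefore does not by itself give convergence. One must show $(p^N,q^N)$ is Cauchy, and the difference $p^N-p^M$ solves a linear BSDE whose forcing term $(\Lambda_s-\Lambda^N_s)^*p^M_s+\sum_i(\Gamma^i_s-\Gamma^{i,N}_s)^*q^{i,M}_s$ is supported on $\{s>\tau_N\}$ and involves the \emph{unbounded} coefficients multiplied by $p^M,q^M$; controlling its $L^2$-norm requires higher-moment estimates on $(p^M,q^M)$ (or a H\"older argument trading moments of $X$ against moments of $p^M,q^M$) that your uniform second-moment bound does not provide. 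This is exactly the kind of difficulty that the reference \cite{orrieri2015necessary} is invoked to handle, so the gap concerns only your optional alternative, not your main argument.
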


Consider now the following affine forward SDE with general forcing term $(\gamma,\rho^i)_{i=1,..,d}$ with $\gamma$ and $\rho^i$, $i=1,..,d$ in $L^2([0,T];L^2(\Omega;\R^{n}))$ and initial condition $\eta \in L^2(\Omega,\mathcal{F}_t;\R^n)$:
\begin{equation}\label{eq:duality:general}
\begin{system}
d\Y_s^{t,\eta,\gamma,\rho} = \Lambda_s \Y_s^{t,\eta,\gamma,\rho}dt + \sum_{i=1}^d \Gamma^i_t\Y_s^{t,\eta,\gamma,\rho}dW^i_t +\gamma_sds +\sum_{i=1}^d  \rho^i_s dW^i_s,\ s \geq t,\\
\Y_t^{t,\eta,\gamma,\rho} = \eta.
\end{system}
\end{equation}
Then by the same technique we adopted in the proof of Theorem \ref{t.existence.SDE}, the above equation admits a unique adapted solution and
\begin{equation}\label{stimadiY}
\E\abs{\Y_r^{t,\eta,\gamma,\rho}}^2 \leq e^{-2\beta(r-t)}\E\abs{\eta}^2 + K \int_t^r e^{-2\beta(r-s)}\E\left[|\gamma_s|^2+\abs{\rho^1_s}^2+....+\abs{\rho^d_s}^2\right] ds.
\end{equation}
When $\gamma\equiv 0$ then the solution to the above equation will be denoted by $\Y^{t,\eta,\rho}$ and when $\rho\equiv 0$ as well, it will be denoted by $\Y^{t,\eta,\gamma}$.
\smallskip

\noindent The next result is proven in \cite{orrieri2015necessary} by computing the It\^o formula the differential of the product $d\braket{Y^{t,\eta,\gamma,\rho}_s,p^{T,\nu}_s}$

\begin{lemma}\label{l:dualityFO} Given $(\rho^i)_{i=1,..,d}$ with $\gamma, \rho^i \in L^2([0,T];L^2(\Omega;\R^{n}))$, $\eta \in L^2(\Omega,\mathcal{F}_t;\R^n)$,  $\nu \in L^2(\Omega,\mathcal{F}_T;\R^n)$ it holds:
\begin{equation}\label{general_duality}
\E\int_t^T \braket{p^{T,\nu}_s, \gamma_s}ds+ 
\sum_{i=1}^d \E\int_t^T \braket{q^{i,T,\nu}_s, \rho^i_s}ds + \E \braket{p^{T,\nu}_t,\eta} = \E\int_t^T \braket{\Y_s^{t,\eta,\gamma,\rho}, \Psi_s}ds + \E\braket{\nu, \Y_T^{t,\eta,\gamma,\rho}}.
\end{equation}
\end{lemma}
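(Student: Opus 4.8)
The plan is to apply the It\^o product rule to the real-valued process $s \mapsto \braket{\Y_s^{t,\eta,\gamma,\rho}, p^{T,\nu}_s}$ on $[t,T]$, integrate over this interval, and then take expectations, so that the stochastic integrals drop out and only the desired pairings survive. Throughout I abbreviate the forward and backward solutions by $\Y_s$, $p_s$, $q^i_s$.

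First I would record both dynamics in It\^o form. The forward equation \eqref{eq:duality:general} reads $d\Y_s = (\Lambda_s\Y_s + \gamma_s)\,ds + \sum_{i=1}^d (\Gamma^i_s\Y_s + \rho^i_s)\,dW^i_s$, and the backward equation of Theorem \ref{t:truncatedBSDEs} reads $dp_s = -[\Lambda_s^* p_s + \sum_{i=1}^d (\Gamma^i_s)^* q^i_s - \Psi_s]\,ds + \sum_{i=1}^d q^i_s\,dW^i_s$. Applying It\^o's formula to the product and collecting the $ds$-terms produces three contributions: the pairing $\braket{\Lambda_s\Y_s + \gamma_s, p_s}$ from $\braket{d\Y_s, p_s}$; the pairing $-\braket{\Y_s, \Lambda_s^* p_s + \sum_i (\Gamma^i_s)^* q^i_s - \Psi_s}$ from $\braket{\Y_s, dp_s}$; and the quadratic covariation $\sum_{i=1}^d \braket{\Gamma^i_s\Y_s + \rho^i_s, q^i_s}$, which arises because the forward and backward martingale parts are driven by the same Brownian motions $W^i$.

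The decisive step is purely algebraic. The identity $\braket{\Lambda_s\Y_s, p_s} = \braket{\Y_s, \Lambda_s^* p_s}$ makes the two $\Lambda$-terms cancel, and $\braket{\Y_s, (\Gamma^i_s)^* q^i_s} = \braket{\Gamma^i_s\Y_s, q^i_s}$ makes the $\Gamma$-contribution inside $\braket{\Y_s, dp_s}$ cancel against the matching part of the covariation. The drift therefore collapses to $\braket{\gamma_s, p_s} + \braket{\Y_s, \Psi_s} + \sum_{i=1}^d \braket{\rho^i_s, q^i_s}$. Integrating from $t$ to $T$, inserting the boundary data $\Y_t = \eta$ and $p_T = \nu$, and taking expectations then yields \eqref{general_duality} after a trivial rearrangement, provided the stochastic-integral terms have vanishing expectation.

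That last proviso is where the real work lies. Because $\Lambda_s$ and $\Gamma^i_s$ grow only polynomially in $X_s$ by (H2)--(H3), the integrands of the stochastic integrals --- typically of the form $\braket{\Gamma^i_s\Y_s + \rho^i_s, p_s}$ and $\braket{\Y_s, q^i_s}$ --- are products of unbounded factors, so one cannot assert the $L^2$-martingale property directly. I would therefore localize: set $\tau_R = \inf\{s \geq t : |\Y_s| + |p_s| > R\} \wedge T$, observe that the stopped stochastic integrals are true martingales with zero expectation, take expectations in the identity stopped at $\tau_R$, and let $R \to \infty$. Controlling this limit is the main obstacle, and it is resolved by combining the forward moment bound \eqref{stimadiY}, the backward a priori estimate $\sup_s \E|p_s|^2 + \sum_i \E\int_t^T |q^i_s|^2\,ds < \infty$ from Theorem \ref{t:truncatedBSDEs}, and the higher-order moment estimates for $X$ from Theorem \ref{t.existence.SDE}; fed through H\"older's inequality these dominate the polynomially growing coefficients, and dominated convergence delivers the identity on the full interval $[t,T]$.
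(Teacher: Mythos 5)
Your method is exactly the paper's: the paper proves Lemma \ref{l:dualityFO} by computing the It\^o differential of the product $\braket{\Y^{t,\eta,\gamma,\rho}_s,p^{T,\nu}_s}$ (deferring the details to \cite{orrieri2015necessary}), and your adjointness cancellations plus a localization argument for the stochastic integrals are the standard way to carry this out.

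There is, however, a sign problem you should not gloss over. You took the backward dynamics as $dp_s = -[\Lambda_s^* p_s + \sum_{i}(\Gamma^i_s)^* q^i_s - \Psi_s]\,ds + \sum_{i} q^i_s\,dW^i_s$, i.e.\ with $\Psi_s$ entering the driver with a \emph{minus} sign, as in \eqref{eq:BSDE} and \eqref{eq:BSDE_trunc}. With that convention your drift computation is correct and collapses to $\braket{\gamma_s,p_s}+\braket{\Y_s,\Psi_s}+\sum_i\braket{\rho^i_s,q^i_s}$, but integrating from $t$ to $T$ and taking expectations then gives
\begin{equation*}
\E\int_t^T \braket{p^{T,\nu}_s,\gamma_s}\,ds + \sum_{i=1}^d \E\int_t^T \braket{q^{i,T,\nu}_s,\rho^i_s}\,ds + \E\braket{p^{T,\nu}_t,\eta} \;=\; \E\braket{\nu,\Y_T^{t,\eta,\gamma,\rho}} \;-\; \E\int_t^T \braket{\Y_s^{t,\eta,\gamma,\rho},\Psi_s}\,ds,
\end{equation*}
which is \eqref{general_duality} with the sign of the $\Psi$-term reversed; no rearrangement turns this into the stated identity, since $\Psi$ is a fixed process, not a free parameter. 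The identity \eqref{general_duality} as written corresponds to the other convention appearing in the paper, namely \eqref{eq:BSDE_int}, where $\Psi_s$ enters the driver with a \emph{plus} sign, $-dp_s = [\Lambda_s^* p_s + \sum_i (\Gamma^i_s)^* q^i_s + \Psi_s]\,ds - \sum_i q^i_s\,dW^i_s$, and that is the version consistent with the later use of the lemma (e.g.\ in the proof of Theorem \ref{thm:necessary inf horizon}). Admittedly the paper itself is inconsistent between \eqref{eq:BSDE}, \eqref{eq:BSDE_trunc} and \eqref{eq:BSDE_int}, so the slip is understandable; but under the dynamics you wrote down, the conclusion you assert is simply not what your computation produces, and you should either switch to the \eqref{eq:BSDE_int} convention or state the sign-flipped identity.

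One smaller analytic remark: after the pathwise cancellation, the drift $\braket{\gamma_s,p_s}+\braket{\Y_s,\Psi_s}+\sum_i\braket{\rho^i_s,q^i_s}$ is integrable on $\Omega\times[t,T]$ by Cauchy--Schwarz (using $\gamma,\rho^i,q^i\in L^2$, $\sup_s\E|p_s|^2<\infty$, \eqref{stimadiY} and $\Psi\in L^{\infty}(\R_+;L^2(\Omega;\R^n))$), so passing to the limit in the Lebesgue integrals is easy. The point that genuinely needs the extra moment information is the boundary term $\E\braket{\Y_{\tau_R},p_{\tau_R}}$: its convergence to $\E\braket{\Y_T,\nu}$ requires uniform integrability, which the pointwise-in-time bounds of \eqref{stimadiY} and Theorem \ref{t:truncatedBSDEs} do not by themselves provide; you need sup-in-time estimates such as $\E\sup_{s\in[t,T]}|\Y_s|^2<\infty$ and $\E\sup_{s\in[t,T]}|p^{T,\nu}_s|^2<\infty$ (obtainable via Burkholder--Davis--Gundy together with the higher moments of $X$ from Theorem \ref{t.existence.SDE}). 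Your outline names the right ingredients, but this is where they must actually be deployed.
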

\noindent In the following, relation \eqref{general_duality} will be the main instrument to get information on the behaviour of the BSDE. We will specifically choose the values of $t,\eta,\rho$ according to our needs. \\
\smallskip
\noindent We are now in a position to define the solution to the infinite horizon multidimensional BSDE and prove its existence and uniqueness

\begin{definition}  A solution to equation \eqref{eq:BSDE} is a  $(d+1)$-tuple of $\mathbb{R}^n$-valued, adapted processes $(p_t,q^1_t,...,q^d_t)_{t\in [0,\infty[}$ such that, for all $T>0$ and all $i=1,...,d$ it holds
$ \mathbb{E}\int_0^T |q^{i}_t|^2 dt <\infty$. Moreover $p$ has continuous trajectories and 
$\sup_{t\in [0,\infty)}\mathbb{E}|p_t|^2<\infty$. Finally, for all $0\leq t \leq T$, \eqref{eq:BSDE_int} holds $\mathbb{P}$-almost surely.
\end{definition}

 The main result of this section is the following
\begin{theorem}\label{t.existence_BSDE}
Let Hypothesis \ref{Hyp} holds true. Then equation \eqref{eq:BSDE} admits a unique solution  $(p^{\infty},$ $q^{1,\infty},...,q^{d,\infty})$.
\end{theorem}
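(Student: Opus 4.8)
The plan is to realize $p^\infty$ as the limit, as $T\to\infty$, of the zero--terminal truncations $p^T := p^{T,0}$ furnished by Theorem \ref{t:truncatedBSDEs}, and to control the entire construction through the duality identity \eqref{general_duality}. Write $\Y^{t,\eta}$ for the solution of \eqref{eq:duality:general} with $\gamma\equiv\rho\equiv 0$ and initial datum $\eta\in L^2(\Omega,\Fcal_t;\R^n)$. The first step is a uniform bound. Taking $\gamma=\rho=0$ and $\nu=0$ in \eqref{general_duality} gives $\E\braket{p^T_t,\eta}=\E\int_t^T\braket{\Y^{t,\eta}_s,\Psi_s}\,ds$; choosing $\eta=p^T_t$, using \eqref{stimadiY} in the form $\E\abs{\Y^{t,\eta}_s}^2\leq e^{-2\beta(s-t)}\E\abs{\eta}^2$, and the a priori bound $M:=\sup_{s\geq 0}(\E\abs{\Psi_s}^2)^{1/2}<\infty$ (which follows from (H5), \eqref{eq:estimate.state} and (H1)), I obtain $(\E\abs{p^T_t}^2)^{1/2}\leq M/\beta$ uniformly in $t\in[0,T]$ and in $T$.

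The second step is a consistency (Cauchy) estimate that exploits dissipativity. For $T<T'$, the restriction of $p^{T'}$ to $[0,T]$ solves the truncated equation \eqref{eq:BSDE_trunc} on $[0,T]$ with terminal datum $\nu=p^{T'}_T$, so by the uniqueness in Theorem \ref{t:truncatedBSDEs} it coincides with $p^{T,p^{T'}_T}$. Hence, by linearity, $p^{T'}_t-p^T_t=p^{T,p^{T'}_T}_t-p^{T,0}_t$ solves the \emph{homogeneous} truncated equation (the forcing $\Psi$ cancels) with terminal value $p^{T'}_T$, and \eqref{general_duality} with $\gamma=\rho=0$ collapses to $\E\braket{p^{T'}_t-p^T_t,\eta}=\E\braket{p^{T'}_T,\Y^{t,\eta}_T}$. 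Taking $\eta=p^{T'}_t-p^T_t$ and combining Step~1 with $\E\abs{\Y^{t,\eta}_T}^2\leq e^{-2\beta(T-t)}\E\abs{\eta}^2$ yields $(\E\abs{p^{T'}_t-p^T_t}^2)^{1/2}\leq \tfrac{M}{\beta}\,e^{-\beta(T-t)}$. Thus $\{p^T_t\}_{T>t}$ is Cauchy in $L^2(\Omega;\R^n)$ for each $t$, uniformly on compact $t$--intervals, and I define $p^\infty_t$ as the limit; the bound of Step~1 passes to the limit, giving $\sup_{t\geq 0}\E\abs{p^\infty_t}^2\leq (M/\beta)^2$. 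For the martingale integrands I fix $T_0$ and note that on $[0,T_0]$ the difference $(p^{T'}-p^T,q^{T'}-q^T)$ solves a homogeneous linear BSDE with terminal value $p^{T'}_{T_0}-p^{T}_{T_0}$; the standard It\^o energy estimate for \eqref{eq:BSDE_trunc} then controls $\sum_i\E\int_0^{T_0}\abs{q^{i,T'}_s-q^{i,T}_s}^2\,ds$ by $C(T_0)\,\E\abs{p^{T'}_{T_0}-p^T_{T_0}}^2\to 0$, so $q^{i,T}\to q^{i,\infty}$ in $L^2(\Omega\times[0,T_0];\R^n)$ for every $T_0$.

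It remains to pass to the limit and to prove uniqueness. Fixing $0\leq t\leq T_0<\min(T,T')$, I let $T\to\infty$ in the integral form of \eqref{eq:BSDE_trunc} on $[t,T_0]$: every term converges in $L^2(\Omega)$ (the drift terms by the $L^2$--convergence of $p^T,q^T$ on $[0,T_0]$ together with the growth/boundedness built into (H2)--(H4), the stochastic integrals by their isometry), so $(p^\infty,q^\infty)$ satisfies \eqref{eq:BSDE_int} on every $[t,T_0]$; the uniform-in-$t$ estimate plus a Burkholder--Davis--Gundy argument on each $[0,T_0]$ give continuity of $t\mapsto p^\infty_t$. For uniqueness, let $(p,q)$ and $(\tilde p,\tilde q)$ be two solutions in the sense of the Definition and set $\delta p=p-\tilde p$; on every $[t,T]$ the pair $(\delta p,\delta q)$ solves the homogeneous linear BSDE with terminal value $\delta p_T$, so the duality identity gives $\E\braket{\delta p_t,\eta}=\E\braket{\delta p_T,\Y^{t,\eta}_T}$. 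Since $\sup_{T}\E\abs{\delta p_T}^2<\infty$ and $\E\abs{\Y^{t,\eta}_T}^2\leq e^{-2\beta(T-t)}\E\abs{\eta}^2\to 0$, letting $T\to\infty$ forces $\E\braket{\delta p_t,\eta}=0$ for all $\eta\in L^2(\Omega,\Fcal_t;\R^n)$, hence $\delta p_t=0$ for every $t$, and then $\delta q=0$ from the equation.

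The key mechanism — and the part I expect to require the most care — is that the lack of time--integrability of the forcing $\Psi$ at infinity is absorbed entirely through the dissipativity of the forward flow $\Y$: it is the factor $e^{-\beta(T-t)}$ in \eqref{stimadiY} that makes both the uniform bound and the Cauchy estimate close, and that simultaneously delivers uniqueness. The genuinely delicate technical point is the treatment of the martingale integrands $q^{i,T}$: unlike $p^T$, they carry no uniform-in-time $L^2$ bound and are controlled only on compact horizons, so the identification of $q^\infty$ and the verification of \eqref{eq:BSDE_int} must be performed horizon by horizon, taking care that the local constants $C(T_0)$ do not interfere with the global limit $T\to\infty$.
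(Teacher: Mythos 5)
Your proof is correct, but the existence part takes a genuinely different route from the paper's. You establish \emph{strong} $L^2$ convergence of the zero--terminal truncations: first the uniform bound $(\E\abs{p^T_t}^2)^{1/2}\leq M/\beta$, then, applying the duality identity to differences, the exponential Cauchy estimate $(\E\abs{p^{T'}_t-p^T_t}^2)^{1/2}\leq (M/\beta)e^{-\beta(T-t)}$, then an energy estimate for the $q$--differences, and finally a passage to the limit in the integral equation. The paper never proves strong convergence of $(p^T,q^T)$: it only obtains weak convergence $p^T_t\rightharpoonup P(t)$, identifies $P(t)$ through the Riesz representation theorem, re-solves the truncated equations with the \emph{corrected} terminal data $\nu=P(N)$, shows via the flow property $\Y^{N,\Y^{t,\eta}_N}_s=\Y^{t,\eta}_s$ that these solutions are \emph{exactly} consistent ($\tilde p^N_t=\tilde p^M_t$ a.s.\ for $t\leq N\leq M$), and glues them with indicators $I_{[N-1,N)}$, reading off $q^{i,\infty}$ from the glued family. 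Your route buys a quantitative convergence rate for the natural approximants $p^{T,0}$ (of independent interest, e.g.\ for the truncated SMP of Remark \ref{thm:necessary truncated}); the paper's route buys the ability to bypass any estimate on the $q$--components. Two points in your sketch need more care than the word ``standard'' suggests: (i) the energy estimate $\sum_i\E\int_0^{T_0}\abs{q^{i,T'}_s-q^{i,T}_s}^2ds\leq C\,\E\abs{p^{T'}_{T_0}-p^T_{T_0}}^2$ is \emph{not} the bounded-coefficient Gronwall estimate, because $\Lambda_s=D_xb(X_s,u_s)$ and $\Gamma^i_s=D_x\sigma^i(X_s,u_s)$ have polynomial growth in $X_s$; it does hold, since after It\^o on $\abs{\delta p_t}^2$ one can bound $2\braket{\Lambda_s\delta p_s,\delta p_s}+2\sum_i\braket{\Gamma^i_s\delta p_s,\delta q^i_s}\leq 2c_p\abs{\delta p_s}^2+\tfrac{1}{2k}\sum_i\abs{\delta q^i_s}^2$ using (H4) and $k>1/2$, so that $C=(1-\tfrac{1}{2k})^{-1}$ is in fact independent of $T_0$; (ii) in the limit passage the drift terms converge only in $L^1(\Omega)$ (Cauchy--Schwarz against $\E\abs{\Lambda_s}^2\leq C(1+\sup_s\E\abs{X_s}^{4m})<\infty$, which uses $p>4m+2$), not in $L^2(\Omega)$ as stated --- still sufficient to identify the limit almost surely. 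Your uniqueness argument is essentially the paper's (duality plus the exponential decay of $\Y$), differing only in that you take $\rho\equiv 0$ and recover $\delta q=0$ afterwards from the equation, whereas the paper keeps a general $\rho$ supported in $[0,T]$ so as to identify the $q^i$ directly.
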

\begin{proof} $ $

\noindent \textit{Existence:}
Let  in \eqref{general_duality} $\nu \equiv 0,\ \gamma \equiv 0,\ \rho \equiv 0$, $\eta \in L^2(\Omega,\mathcal{F}_t;\R^n)$ then 
\begin{equation}
\E \braket{p^T_t,\eta} = \E \int_t^T \braket{\Y^{t,\eta}_s ,\Psi_s } ds.
\end{equation}
Since $\Psi \in  L^{\infty}\lp \R_+; L^2(\Omega;\R^n) \rp$  by \eqref{stimadiY} we deduce that 
$$\E \int_t^T \braket{\Y^{t,\eta}_s, \Psi_s } ds \rightarrow \E \int_t^{\infty} \braket{\Y^{t,\eta}_s, \Psi_s} ds$$
and that the right hand side is a bounded linear operator from $L^2(\Omega,\mathcal{F}_t;\R^n) \to \R$.
Hence, by Riesz representation theorem there exists an element $P(t)\in L^2(\Omega,\mathcal{F}_t;\R^n)$ such that  
\begin{equation}\label{eq:Riesz:p_infty}
\E \braket{P(t),\eta} = \E \int_t^\infty \braket{\Y^{t,\eta}_s, \Psi_s } ds.
\end{equation}
Moreover $p^T(t)\rightharpoonup P(t)$ in $L^2(\Omega,\mathcal{F}_t,\mathbb{P};\mathbb{R}^n)$ and
$
\E\abs{P(t)}^2 \leq \beta^{-1} \sup_{s\in [0,\infty[} (\mathbb{E}|\Psi_s|^2)^{1/2}$ for all $t>0$.

Let now for all $N\in \mathbb{N}$, $(\tilde{p}^N_t,\tilde{q}^{1,N}_t,...,\tilde{q}^{d,N}_t)_{t\in [0,N]}$ be the solution of equation 
\eqref{eq:BSDE_trunc} with $T=N$ and $\nu=P(N)$.

We claim that, for all $N,M\in \mathbb{N}$ with $0\leq N\leq M$ and all $t\leq N$ it holds  \begin{equation}\label{eq:consistency}\tilde{p}^N(t)=\tilde{p}^M(t), \hbox{ $\mathbb{P}$-a.s.}\end{equation}
By definition 
and Lemma \ref{l:dualityFO} we deduce that for all $\eta\in L^2(\Omega,\mathcal{F}_t,\mathbb{P};\mathbb{R}^n)$
$$
 \E \braket{\tilde{p}^{N}_t, \eta} = \E\int_t^N \braket{\Y_s^{t,\eta}, \Psi_s}ds + \E\braket{P(N), \Y_N^{t,\eta}}.$$
 Plugging \eqref{eq:Riesz:p_infty} with $t=N$ in the above relation we have
 $$
 \E \braket{\tilde{p}^{N}_t, \eta} = \E\int_t^N \braket{\Y_s^{t,\eta}, \Psi_s}ds +  \E\int_N^{\infty} \braket{\Y_s^{N,\Y_N^{t,\eta}}, \Psi_s}ds.$$
and finally, observing that by uniqueness of the solution to equation \eqref{eq:duality:general} $\Y_s^{N,\Y_N^{t,\eta}}=\Y_s^{t,\eta}$ $\mathbb{P}$-a.s. we conclude
 $$
 \E \braket{\tilde{p}^{N}_t, \eta} = \E\int_t^{\infty} \braket{\Y_s^{t,\eta}, \Psi_s}ds=\mathbb{E}\braket{\eta, P(t)}.$$
 and our claim is proved since the right hand side does not depend on $N$.
 We also remark that by the above identity we deduce that 
 $$\sup_{t\in [0,N]} |\tilde{p}^N_t|^2 \leq  \beta^{-1} \sup_{s\in [0,\infty[} (\mathbb{E}|\psi_s|^2)^{1/2},$$
 and that the right hand side does not depend neither on $t$ nor on $N$.
 \bigskip
 
\noindent Now we define
 $$
 p^{\infty}_t=\sum_{N=1}^{\infty} \tilde{p}^N_t I_{[N-1,N[}(t),\quad 
 q^{i,\infty}_t=\sum_{N=1}^{\infty} \tilde{q}^{i,N}_t I_{[N-1,N[}(t),$$
 and claim that it is the desired solution.
 Indeed it satisfies the desired integrability and adaptedness conditions. Moreover fixed $0\leq t\leq T$ then
 \begin{equation}\begin{split}p^{\infty}_t-p^{\infty}_T &=
[ p^{\infty}_t-p^{\infty}_{\lfloor t\rfloor+1}]+ [ p^{\infty}_{\lfloor T\rfloor }-p^{\infty}_T]+
 \sum_{n=\lfloor t\rfloor+1}^{\lfloor T\rfloor -1}[ p^{\infty}_n-p^{\infty}_{n+1}] \\
 &=\big[ \tilde{p}^{\lfloor t\rfloor+1}_t-\tilde{p}^{\lfloor t\rfloor+2}_{\lfloor t\rfloor+1}\big]+ \big[ \tilde{p}^{\lfloor T\rfloor+1}_{\lfloor T\rfloor }-\tilde{p}^{\lfloor T\rfloor+1}_T\big]+
 \sum_{n=\lfloor t\rfloor+1}^{\lfloor T\rfloor -1}[ \tilde{p}^{n+1}_n-\tilde{p}^{n+2}_{n+1}]
  \\
 &=\big[ \tilde{p}^{\lfloor t\rfloor+1}_t-\tilde{p}^{\lfloor t\rfloor+1}_{\lfloor t\rfloor+1}\big]+ \big[ \tilde{p}^{\lfloor T\rfloor+1}_{\lfloor T\rfloor }-\tilde{p}^{\lfloor T\rfloor+1}_T\big]+
 \sum_{n=\lfloor t\rfloor+1}^{\lfloor T\rfloor -1}[ \tilde{p}^{n+1}_n-\tilde{p}^{n+1}_{n+1}],
 \end{split}\end{equation}
 where in the last equality we have exploited \eqref{eq:consistency} where it was needed. Now recalling that $(\tilde{p}^N_t,\tilde{q}^{1,N}_t,...,\tilde{q}^{d,N}_t)_{t\in [0,N]}$ solves equation 
\eqref{eq:BSDE_trunc} and the definition of  $({p}^{\infty},{q}^{1,\infty},...,\tilde{q}^{d,\infty})$ the above equality can be rewritten as
 \begin{equation}\begin{split}p^{\infty}_t-p^{\infty}_T &=\int_t^{\lfloor t\rfloor+1}\!\!\!\! \Lambda^*_s p^{\infty}_sds+\sum_{i=1}^d\int_t^{\lfloor t\rfloor+1}\!\!\!\! (\Gamma^{i}_s)^* q^{i,\infty}_sds+\int_t^{\lfloor t\rfloor+1}\!\!\!\!\Psi_sds+\sum_{i=1}^d\int_t^{\lfloor t\rfloor+1}\!\!\!\! q^{i,\infty}_s dW^i_s
 \\
 & \quad + \sum_{n=\lfloor t\rfloor+1}^{\lfloor T\rfloor-1}
 \left[
\int_n^{n+1} \!\!\!\!\Lambda^*_s p^{\infty}_sds+\sum_{i=1}^d\int_n^{n+1}\!\!\!\! (\Gamma^{i}_s)^* q^{i,\infty}_sds+\int_n^{n+1}\!\!\!\!\Psi_sds+\sum_{i=1}^d\int_n^{n+1} \!\!\!\! q^{i,\infty}_s dW^i_s \right]\\
&\quad +\int_{\lfloor T\rfloor}^T\!\! \Lambda^*_s p^{\infty}_sds+\sum_{i=1}^d\int_{\lfloor T\rfloor}^T\!\! (\Gamma^{i}_s)^* q^{i,\infty}_sds+\int_{\lfloor T\rfloor}^T\!\!\Psi_sds+\sum_{i=1}^d\int_{\lfloor T\rfloor}^T\!\! q^{i,\infty}_s dW^i_s\\
&= \int_t^T\!\!\Lambda^*_s p^{\infty}_sds+\sum_{i=1}^d\int_t^T\!\! (\Gamma^{i})^*_s q^{i,\infty}_sds+\int_t^T\!\!\Psi_sds+\sum_{i=1}^d\int_t^T\!\! q^{i,\infty}_s dW^i_s
\end{split} \end{equation}
and this completes the proof of existence of a solution to equation \eqref{eq:BSDE_int}.

$ $

\noindent \textit{Uniqueness:} Let $(p_t,q^1_t,...,q^d_t)_{t\geq 0}$ be a solution to equation \eqref{eq:BSDE_int}.  We choose $\rho\in L^2(\Omega\times [0,\infty[;\mathbb{R}^n)$ with support in the finite interval $[0,T]$ ($\rho_r = 0$, if $r \geq T$) and $\eta \in L^2(\Omega, \mathcal{F}_t,\mathbb{P}; \mathbb{R}^n)$.
Noticing that $(p_t,q^1_t,...,q^d_t)_{t\geq 0}$ is, in particular a solution to equation \eqref{eq:BSDE_trunc} in $[0,T]$ with $\nu=p_T$ by Lemma \ref{l:dualityFO} we get:
$$
\E\int_t^T \braket{\Y_s^{t,\eta,\rho}, \Psi_s}ds + \E\braket{p_T, \Y_T^{t,\eta,\rho}} = \sum_{i=1}^d\E\int_t^T \braket{ q^i_s, \rho_s}ds + \E \braket{\eta, \tilde p_t} .$$
We notice that since $\rho_t=0$ for $t>T$ then by \eqref{stimadiY} we have that $\E\abs{\Y_s^{t,\eta,\rho}}^2\leq Ce^{-2\beta(s-t)}$ for a suitable $C$. So letting $T\rightarrow \infty$ in the above equality we get (recall that $\sup_{t\geq 0}\E|p_t|^2< \infty $ by definition of solution):
\begin{equation}\label{eq:dualityIO}
\E\int_t^{\infty} \braket{\Y_s^{t,\eta,\rho}, \Psi_s}ds= \sum_{i=1}^d\E\int_t^T \braket{ q^i_s, \rho_s}ds + \E \braket{\eta, \tilde p_t}
\end{equation}
and this completes the proof of uniqueness due to the arbitrariness of $t,T,\rho$ and $\eta$. \end{proof}
As a by-product of the above proof we have the following infinite-horizon version of the duality relation:
\begin{corollary}\label{cor:dualityIO}
Let $(p_t,q^1_t,...,q^d_t)_{t\geq 0}$ be a solution to equation \eqref{eq:BSDE_int}.  Fix $\rho\in L^2(\Omega\times [0,\infty[;\mathbb{R}^n)$ with support in  $[0,T]$, $ t\in [0,T)$ and $\eta\in L^2(\Omega, \mathcal{F}_t,\mathbb{P};\mathbb{R}^n)$  then \eqref{eq:dualityIO} holds.

\end{corollary}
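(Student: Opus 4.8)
The plan is to recognize that \eqref{eq:dualityIO} was already derived, verbatim, inside the uniqueness part of the proof of Theorem \ref{t.existence_BSDE}; the corollary merely isolates that computation as a standalone statement, valid for \emph{every} solution rather than only as a step towards uniqueness. Accordingly I would reproduce that argument, being careful about the role played by the defining integrability of a solution.

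First I would fix $\rho$ with support in $[0,T]$, together with $t\in[0,T)$ and $\eta\in L^2(\Omega,\Fcal_t;\R^n)$, and then introduce an auxiliary running horizon $T'\geq T$. The key observation is that any solution $(p_t,q^1_t,\dots,q^d_t)_{t\geq 0}$ to \eqref{eq:BSDE_int}, restricted to $[0,T']$, is in particular a solution of the truncated equation \eqref{eq:BSDE_trunc} with terminal datum $\nu=p_{T'}$; this is immediate from the definition of solution, since it provides both continuity of $p$ and $\E\int_0^{T'}\abs{q^i_s}^2\,ds<\infty$. Applying Lemma \ref{l:dualityFO} with $\gamma\equiv 0$ and this choice of $\nu$ then yields
\begin{equation*}
\E\int_t^{T'}\braket{\Y_s^{t,\eta,\rho},\Psi_s}\,ds+\E\braket{p_{T'},\Y_{T'}^{t,\eta,\rho}}=\sum_{i=1}^d\E\int_t^{T'}\braket{q^i_s,\rho_s}\,ds+\E\braket{\eta,p_t}.
\end{equation*}
Because $\rho_s=0$ for $s>T$, the $q$-integral on the right already equals its value over $[t,T]$, independently of $T'$.

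It then remains to let $T'\to\infty$, and the single quantitative ingredient is estimate \eqref{stimadiY}. Since $\rho$ carries no forcing past $T$ and $\gamma\equiv 0$, that estimate gives $\E\abs{\Y_s^{t,\eta,\rho}}^2\leq Ce^{-2\beta(s-t)}$ for all $s\geq T$. Combined with $\Psi\in L^{\infty}(\R_+;L^2(\Omega;\R^n))$, this exponential decay makes $s\mapsto\E\braket{\Y_s^{t,\eta,\rho},\Psi_s}$ integrable on $[t,\infty)$, so the first integral converges to $\E\int_t^{\infty}\braket{\Y_s^{t,\eta,\rho},\Psi_s}\,ds$. The same decay, now paired with the defining uniform bound $\sup_{s\geq 0}\E\abs{p_s}^2<\infty$, forces the boundary term to vanish, since by Cauchy--Schwarz $\abs{\E\braket{p_{T'},\Y_{T'}^{t,\eta,\rho}}}\leq(\E\abs{p_{T'}}^2)^{1/2}(\E\abs{\Y_{T'}^{t,\eta,\rho}}^2)^{1/2}\leq C'e^{-\beta(T'-t)}\to 0$. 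Passing to the limit leaves precisely \eqref{eq:dualityIO}.

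I do not expect any genuine obstacle here beyond bookkeeping: every step is forced once the finite-horizon duality of Lemma \ref{l:dualityFO} is in hand. The only point that must be watched, and which I would emphasize, is structural rather than computational — it is exactly the uniform $L^2$ bound built into the \emph{definition} of a solution to \eqref{eq:BSDE} that lets the boundary term $\E\braket{p_{T'},\Y_{T'}^{t,\eta,\rho}}$ be controlled, even though $p$ itself is not integrable up to infinity. I would therefore present the corollary as a direct rereading of the uniqueness computation, stressing only this role of the definition.
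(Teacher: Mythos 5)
Your proposal is correct and follows essentially the same route as the paper, which itself presents this corollary as a by-product of the uniqueness argument in Theorem \ref{t.existence_BSDE}: restrict the solution to a finite horizon, view it as a solution of \eqref{eq:BSDE_trunc} with $\nu=p_{T'}$, apply Lemma \ref{l:dualityFO}, and pass to the limit using \eqref{stimadiY} together with $\sup_{s\geq 0}\E\abs{p_s}^2<\infty$. Your only departure is a cosmetic improvement: introducing the auxiliary horizon $T'\geq T$ cleanly separates the support of $\rho$ from the horizon being sent to infinity, which the paper conflates by using the same letter $T$ for both.
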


\section{Necessary Ergodic SMP}

We give two versions of the SMP in its necessary form. The first is based on the well-posedness result for the infinite horizon BSPDE. The second one is written in terms of the family of truncated backward equations introduced in the previous section. The Hamiltonian associated to the system is 
\begin{equation}
H(x,u,p,q^1,...q^d) = \braket{b(x,u),p} + \sum_{i=1}^d\braket{\sigma^i(x,u),q^i} + f(x,u).
\end{equation}
We are now in a condition to formulate a necessary condition corresponding to the ergodic control problem.
\begin{theorem}[SMP infinite horizon case]\label{thm:necessary inf horizon}
Suppose that $(\bar{X},\bar{u})$ is an optimal pair for the control problem $J^{\inf} $ or $J^{\sup}$ and let $(p^\infty, q^\infty) =  (p^\infty,q^{\infty,1},...,q^{\infty,d})$ be the solution of equation \eqref{eq:BSDE}. Then under Hypothesis 1, the following variational inequality holds:
\begin{equation}
\begin{split}
0&\leq \limsup_{T \to \infty} \frac{1}{T} \E\int_0^T \braket{ D_u H\lp \bar{X}_t, \bar{u}_t, p^\infty_t, q^\infty_t \rp , u_t-\bar{u}_t}_{\R^l} dt,
\end{split}
\end{equation}
where $H(x,u,p,q)$ is the Hamiltonian of the system, and $u(\cdot)$ is an arbitrary admissible control. 
\end{theorem}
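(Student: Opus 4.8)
The plan is to combine the first–order expansion of the cost (Lemma~\ref{l:derivative_J}) with the duality identity (Lemma~\ref{l:dualityFO}), and then to absorb the arising terminal term thanks to the ergodic normalisation $1/T$.

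I would first turn optimality into a variational inequality for the cost. Since $U$ is convex, for every admissible $u(\cdot)$ and every $\theta\in(0,1]$ the control $\bar u(\cdot)+\theta v(\cdot)$, with $v=u-\bar u$, is admissible; optimality of $\bar u$ then makes the difference quotient in Lemma~\ref{l:derivative_J} nonnegative for each $\theta>0$. Letting $\theta\to 0_+$ and invoking \eqref{eq:Gateaux J inf} (or \eqref{eq:Gateaux J sup} in the $\sup$ case) yields
\[
0 \le \limsup_{T\to\infty}\frac1T\,\E\int_0^T\left[\braket{\Psi_t,Y_t}+\braket{D_uf(\bar X_t,\bar u_t),v_t}\right]dt,
\]
where $\Psi_t=D_xf(\bar X_t,\bar u_t)$ and $Y$ is the first variation solving \eqref{eq:first:variation}.

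The key move is to rewrite $\E\int_0^T\braket{\Psi_t,Y_t}\,dt$ through duality. Comparing \eqref{eq:first:variation} with \eqref{eq:duality:general} shows that $Y=\Y^{0,0,\gamma,\rho}$ with $\gamma_s=D_ub(\bar X_s,\bar u_s)v_s$ and $\rho^i_s=D_u\sigma^i(\bar X_s,\bar u_s)v_s$; by (H2)--(H3) these forcings are bounded in $s$ and, since $\sup_s\E\abs{v_s}^p<\infty$, they lie in $L^2([0,T];L^2(\Omega;\R^n))$. Moreover, on $[0,T]$ the infinite-horizon adjoint $(p^\infty,q^\infty)$ is exactly the solution of the truncated equation \eqref{eq:BSDE_trunc} with terminal datum $\nu=p^\infty_T\in L^2(\Omega,\Fcal_T;\R^n)$. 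Applying Lemma~\ref{l:dualityFO} with $t=0$, $\eta=0$, $\nu=p^\infty_T$ and transposing $D_ub$, $D_u\sigma^i$ gives
\[
\E\int_0^T\braket{\Psi_s,Y_s}\,ds = \E\int_0^T\braket{(D_ub)^*p^\infty_s+\sum_{i=1}^d(D_u\sigma^i)^*q^{i,\infty}_s,\,v_s}\,ds - \E\braket{p^\infty_T,Y_T}.
\]

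Substituting and recognising that $(D_ub)^*p+\sum_i(D_u\sigma^i)^*q^i+D_uf=D_uH(\bar X,\bar u,p,q)$, the inequality becomes
\[
0 \le \limsup_{T\to\infty}\frac1T\left[\E\int_0^T\braket{D_uH(\bar X_s,\bar u_s,p^\infty_s,q^\infty_s),v_s}\,ds-\E\braket{p^\infty_T,Y_T}\right].
\]
The step I expect to be the crux is discarding the boundary term: by Cauchy--Schwarz $\abs{\E\braket{p^\infty_T,Y_T}}\le(\E\abs{p^\infty_T}^2)^{1/2}(\E\abs{Y_T}^2)^{1/2}$, and both factors are bounded uniformly in $T$, the first because $\sup_t\E\abs{p^\infty_t}^2<\infty$ (definition of solution to \eqref{eq:BSDE}) and the second because $\sup_t\E\abs{Y_t}^2\le(\sup_t\E\abs{Y_t}^p)^{2/p}<\infty$ by Lemma~\ref{lemma:y}. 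Hence $\tfrac1T\E\braket{p^\infty_T,Y_T}\to0$, which does not affect the $\limsup$, and the asserted variational inequality follows with $v=u-\bar u$. The same computation applies verbatim to $J^{\sup}$ via \eqref{eq:Gateaux J sup}. Thus the only real difficulties are bookkeeping (matching the duality sign conventions and the adjoints of $D_ub,D_u\sigma$) together with the uniform-in-$T$ control of the terminal term, which is precisely where the boundedness of the infinite-horizon adjoint and the ergodic factor $1/T$ are indispensable.
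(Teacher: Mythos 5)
Your proposal is correct and follows essentially the same route as the paper's proof: optimality plus Lemma~\ref{l:derivative_J} gives the nonnegativity of the first--order cost variation, Lemma~\ref{l:dualityFO} with $t=0$, $\eta=0$, $\nu=p^\infty_T$, $\gamma=D_ub(\bar X,\bar u)v$, $\rho^i=D_u\sigma^i(\bar X,\bar u)v$ converts $\E\int_0^T\braket{D_xf,Y_t}\,dt$ into the $D_uH$ pairing plus the terminal term, and the latter is killed by $1/T$ via Cauchy--Schwarz and the uniform $L^2$ bounds on $p^\infty$ and $Y$. The only point you make more explicit than the paper (correctly) is that $(p^\infty,q^\infty)$ restricted to $[0,T]$ solves the truncated equation \eqref{eq:BSDE_trunc} with $\nu=p^\infty_T$, which is what licenses the application of Lemma~\ref{l:dualityFO}.
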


\begin{proof}
Let $v(\cdot)= u(\cdot)-\bar{u}(\cdot)$ and let $Y_t$  be the solution to equation \eqref{eq:first:variation}. Lemma \ref{l:dualityFO} with $t=0$, $\eta=0$, $\nu=p^{\infty}_T$, $\gamma= D_u b(\bar{X},\bar{u})$, $\rho^{i}=D_u \sigma(\bar{X},\bar{u})v$ yields
\begin{equation}
\begin{split}
& \E\int_0^T \braket{D_x f(\bar{X}_t,\bar{u}_t),Y_t}dt \\
\quad &= \E\braket{p^\infty_T,Y_T} + \E\int_0^T \braket{p^\infty_t,D_u b(\bar{X}_t,\bar{u}_t)v_t} dt  + \E\int_0^T \braket{q^\infty_t,D_u \sigma(\bar{X}_t,\bar{u}_t)v_t} dt. 
\end{split}
\end{equation}
So that, from Lemma \ref{l:derivative_J} and the relation above, we have
\begin{equation*}
\begin{split}
0& \leq \limsup_{T \to \infty} \frac{1}{T} \E\int_0^T \left[\braket{ D_xf(\bar{X}_t,\bar{u}_t), Y_t}_{\mathbb{R}^n}+ \braket{ D_uf(\bar{X}_t,\bar{u}_t), v_t }_{\mathbb{R}^l}  \right]dt.\\
& \leq -\limsup_{T \to \infty} \frac{1}{T}\E \braket{Y_T,p^\infty_T} + \limsup_{T \to \infty} \frac{1}{T} \E\int_0^T \Big[ \braket{ D_uH(\bar X_t, \bar u_t, p^\infty_t, q^\infty_t),  v_t}_{\R^l} \Big] dt .
\end{split}
\end{equation*}
Recalling that $sup_{t\geq 0}\mathbb{E}|p^{\infty}_t|^2<+\infty$
by definition of solution to equation \eqref{eq:BSDE} and 
 $sup_{t \geq 0}\mathbb{E}|Y_t|^2<+\infty$ by \eqref{stimadiY} we can conclude that
\begin{equation*} 
0 \leq \limsup_{T \to \infty} \frac{1}{T} \E\int_0^T \Big[ \braket{ D_uH(\bar X_t, \bar u_t, p^\infty_t, q^\infty_t),  v_t}_{\R^l} \Big] dt
\end{equation*}
and the claim is proved
\end{proof}

\begin{remark}[SMP truncated case]\label{thm:necessary truncated}
Similarly we can prove a truncated version of the stochastic maximum principle that involves the solution $(p^T,q^{1,T}, ... q^{d,T})$ of equation \eqref{eq:BSDE_trunc} with $\eta=0$.
Indeed if  $(\bar{X},\bar{u})$ is an optimal pair for the control problem 
\eqref{ergodic cost}, then under Hypothesis 1 the following variational inequality holds
\begin{equation}
0\leq\limsup_{T \to \infty} \frac{1}{T} \E\int_0^T \braket{ D_u H\lp \bar{X}_t, \bar{u}_t, p^T_t, q^T_t \rp , u_t-\bar{u}_t}_{\R^l} dt,
\end{equation}
where $H(x,u,p,q)$ is the Hamiltonian of the system and $u(\cdot)$ is an arbitrary admissible control. 
\end{remark}

\begin{proof}
Let $v_t = u_t-\bar{u}_t$, for every $u_t$ admissible. The result easily follows combining Lemma \ref{l:derivative_J} with a duality argument. Precisely, choose $\eta = 0$, $\nu_t = D_u b\lp \bar{X}_t,\bar{u}_t \rp v_t$ and $\Psi_t = D_x f(\bar{X}_t,\bar{u}_t)$ in the general formula \eqref{eq:duality:general}.
\end{proof}

\section{Sufficient SMP}

In this part we prove that under some additional convexity assumption on the Hamiltonian 
function $H$, the variational inequality obtained in Theorem \ref{thm:necessary inf horizon} (the same hold also for Theorem \ref{thm:necessary truncated}) is sufficient for optimality. 

\begin{theorem}[Sufficient SMP]\label{thm:sufficient infinite}
Let $u^*(\cdot) \in \mathcal{U}_{ad}$ be an admissible control, $X^*$ be the corresponding state process and $p^*$ the first adjoint process on infinite time horizon solving \eqref{eq:BSDE} for the couple $(u^*,X^*)$.
Further, let $(x,u) \mapsto H(x,u,p^*_t,q^*_t)$ be a convex function $d \mP \times dt-$a.e. and the following minimality condition holds
\begin{equation}\label{eq:suf minimality}
\limsup_{T \rightarrow +\infty} \frac{1}{T} \E \int^T_0 \braket{ D_u H(X^*_t,u^*_t,p^*_t,q^*_t)
, u_t - u^*_t}_{\R^l} dt \geq 0,
\end{equation}
for every $u(\cdot) \in \mathcal{U}_{ad}$. Then 
$u^*(\cdot)$ is optimal both for $\liminf$ and $\limsup$ formulations of the ergodic control problem.  
\end{theorem}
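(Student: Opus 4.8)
The plan is to exploit convexity of $H$ in $(x,u)$ to bound the cost difference $J_T(u)-J_T(u^*)$ from below by the directional expression appearing in the minimality condition \eqref{eq:suf minimality}. First I would fix an arbitrary admissible control $u(\cdot)$, let $X$ be its corresponding state, and write $v=u-u^*$, $\Delta X = X - X^*$. Since $f$ is bounded below, both cost functionals are well defined. The starting point is to observe that, by the definition of the Hamiltonian, $f(x,u) = H(x,u,p^*_t,q^*_t) - \braket{b(x,u),p^*_t} - \sum_{i=1}^d \braket{\sigma^i(x,u),q^*_t}$, so the integrand $f(X_t,u_t)-f(X^*_t,u^*_t)$ can be rewritten in terms of an increment of $H$ minus increments of the $\braket{b,p^*}$ and $\braket{\sigma^i,q^{*,i}}$ terms.

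Next I would invoke the assumed convexity of $(x,u)\mapsto H(x,u,p^*_t,q^*_t)$, which gives the subgradient inequality $H(X_t,u_t,p^*_t,q^*_t) - H(X^*_t,u^*_t,p^*_t,q^*_t) \geq \braket{D_xH(\cdots),\Delta X_t} + \braket{D_uH(\cdots),v_t}$ pointwise $d\mP\times dt$-a.e. Combining this with the rewriting above, the cost difference is bounded below by a term involving $\braket{D_uH,v_t}$ (the one we want) plus a term involving $\braket{D_xH,\Delta X_t}$ together with the drift/diffusion increments; the latter must be shown to be asymptotically negligible after averaging. The key algebraic identity here is that $D_xH(X^*_t,u^*_t,p^*_t,q^*_t) = D_xb^*p^*_t + \sum_i D_x\sigma^{i*}q^{*,i}_t + D_xf$, which is precisely the driver of the adjoint BSDE \eqref{eq:BSDE}. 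The plan is therefore to apply the duality relation — Corollary \ref{cor:dualityIO} or Lemma \ref{l:dualityFO} in the infinite-horizon limit — to the process $\Delta X$, which approximately solves an affine equation of the form \eqref{eq:duality:general} with forcing coming from the increments of $b$ and $\sigma$ along $v$, so that the $\braket{D_xH,\Delta X_t}$ contribution cancels against the $\braket{b,p^*}$ and $\braket{\sigma^i,q^{*,i}}$ increments up to a remainder governed by the second-order Taylor terms.

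The main obstacle I expect is controlling this remainder and justifying that the boundary term $\frac{1}{T}\E\braket{p^*_T,\Delta X_T}$ vanishes as $T\to\infty$. For the boundary term one uses $\sup_{t\geq 0}\E|p^*_t|^2<\infty$ from the definition of solution to \eqref{eq:BSDE} together with a uniform-in-time bound on $\E|\Delta X_T|^2$ coming from the dissipativity estimate \eqref{eq:estimate.state} applied to both $X$ and $X^*$; dividing by $T$ then kills it. The subtler point is that, unlike in the first-variation analysis, here $\Delta X$ is a genuine (non-infinitesimal) difference, so the linearization of $b$ and $\sigma$ leaves quadratic error terms; however, convexity of $H$ is exactly what absorbs these second-order errors with the correct sign, so no smallness in $\theta$ is needed. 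After averaging in $\frac{1}{T}\E\int_0^T(\cdots)\,dt$ and passing to the $\limsup$, the convexity inequality yields
\begin{equation*}
J^{\inf}(u(\cdot)) - J^{\inf}(u^*(\cdot)) \geq \limsup_{T\to\infty}\frac{1}{T}\E\int_0^T \braket{D_uH(X^*_t,u^*_t,p^*_t,q^*_t),v_t}_{\R^l}\,dt \geq 0,
\end{equation*}
where the final inequality is the minimality condition \eqref{eq:suf minimality}. The identical argument, replacing $\liminf$ by $\limsup$ throughout and again using $\limsup(a_n)-\limsup(b_n)\le\limsup(a_n-b_n)$ as in Lemma \ref{l:derivative_J}, handles the $\limsup$ formulation, so that $u^*(\cdot)$ is optimal in both senses.
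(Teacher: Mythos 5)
Your proposal follows essentially the same route as the paper's own proof: rewrite the cost increment through the Hamiltonian, bound the $H$-increment by its first-order terms using convexity, dispose of the $\braket{D_uH, u_t-u^*_t}$ term via \eqref{eq:suf minimality}, cancel the $\braket{D_xH,\Delta X_t}$ term (with $\Delta X_t = X_t - X^*_t$) against the $b$- and $\sigma$-increments paired with $(p^*,q^*)$ up to the boundary term $\frac{1}{T}\E\braket{p^*_T,\Delta X_T}$, and kill that boundary term using the uniform-in-time $L^2$ bounds on $p^*$, $X$, $X^*$ after dividing by $T$. The only mechanical difference is that you invoke Lemma \ref{l:dualityFO} (or Corollary \ref{cor:dualityIO}) where the paper applies It\^o's formula directly to $\braket{p^*_t, X_t - X^*_t}$; these are the same computation, since the duality lemma is itself proved by precisely that It\^o argument.

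Two places where your description should be tightened. First, the cancellation step is exact, not approximate: $\Delta X$ solves \eqref{eq:duality:general} \emph{exactly}, with $\Lambda_t = D_xb(X^*_t,u^*_t)$, $\Gamma^i_t = D_x\sigma^i(X^*_t,u^*_t)$, $\eta = 0$, and forcing $\gamma_t = b(X_t,u_t)-b(X^*_t,u^*_t)-\Lambda_t\Delta X_t$, $\rho^i_t = \sigma^i(X_t,u_t)-\sigma^i(X^*_t,u^*_t)-\Gamma^i_t\Delta X_t$, which are square integrable on $[0,T]$ by (H2)--(H3), the moment bounds and $p>(4m+2)\vee 4$. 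With this choice the duality relation is an identity matching the \emph{full nonlinear} increments of $b$ and $\sigma$; there is no Taylor remainder to ``absorb,'' and convexity is used only to bound $H(X^*_t,u^*_t,p^*_t,q^*_t)-H(X_t,u_t,p^*_t,q^*_t)$. This distinction matters: had you applied the duality to the linearized (first-variation) equation instead, the gap between its solution $Y$ and $\Delta X$ could not be controlled without smallness of $v$, and convexity would not rescue sign-indefinite terms like $\braket{p^*_t, R^b_t}$; so it is essential that, as you propose, the duality is applied to $\Delta X$ itself. Second, a genuine logical point in your final display: superadditivity of $\liminf$ (and the mixed inequality $\limsup(a_T+c_T)\geq \limsup a_T + \liminf c_T$ for the $\limsup$ formulation) yields $J^{\inf}(u)-J^{\inf}(u^*) \geq \liminf_{T\to\infty}\frac{1}{T}\E\int_0^T \braket{D_uH(X^*_t,u^*_t,p^*_t,q^*_t),v_t}\,dt$, with a $\liminf$, not the $\limsup$ you wrote; concluding nonnegativity therefore requires the $\liminf$ form of the minimality condition. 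You are in good company: the paper's own proof discards $\limsup_{T}\frac{1}{T}\E\int_0^T\braket{D_uH,u^*_t-u_t}\,dt$ by citing \eqref{eq:suf minimality}, and since $\limsup(-a_T) = -\liminf a_T$ this step likewise uses $\liminf_T \frac{1}{T}\E\int_0^T\braket{D_uH,u_t-u^*_t}\,dt \geq 0$, which is strictly stronger than the stated hypothesis. So your attempt is no weaker than the paper's argument here, but be aware that, as written, the $\limsup$ condition is the wrong side for sufficiency.
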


\begin{proof}
Let $u(\cdot) \in \mathcal{U}_{ad}$ be arbitrary but fixed. Then the goal is to show that
the difference $J(u^*(\cdot)) - J(u(\cdot)) $ is non-positive. Using the sub additivity of the $\limsup$ we have
\begin{align*}
J(u^*(\cdot)) &- J(u(\cdot)) \leq \limsup_{T \rightarrow +\infty} \frac{1}{T} \E \int^T_0 
\left[ f\lp X^*_t, u^*_t \rp - f\lp X_t, u_t \rp \right] dt \nonumber \\
&\quad = \limsup_{T \rightarrow +\infty} \frac{1}{T} \E \int^T_0
\left[ H ( X^*_t, u^*_t, p^{*}_t, q^{*}_t ) - H( X_t, u_t, p^{*}_t, q^{*}_t) \right] dt
\nonumber \\ 
& \quad + \limsup_{T \rightarrow +\infty} \frac{1}{T} \E \int^T_0  
\braket{ b \lp  X_t, u_t \rp - b \lp  X^*_t, u^*_t \rp  , p^{*}_t} dt \\
& \quad + \limsup_{T \rightarrow +\infty} \frac{1}{T} \E \int^T_0  
\braket{ \sigma \lp  X_t, u_t \rp - \sigma \lp  X^*_t, u^*_t \rp  , q^{*}_t} dt = I_1 + I_2 + I_3.  
\end{align*}

\noindent Now, due to convexity of $H$, the term $I_1$ can be estimated from above as follows

\begin{align*}
I_1 & \leq \limsup_{T \rightarrow +\infty} \frac{1}{T} \E \int^T_0
\braket{ D_x H(X^*_t, u^*_t, p^{*}_t, q^{*}_t ), X^*_t - X_t} dt \nonumber \\
& \quad +  \limsup_{T \rightarrow +\infty} \frac{1}{T} \E \int^T_0
\braket{ D_u H(X^*_t, u^*_t, p^{*}_t, q^{*}_t), u^*_t - u_t}_U dt \nonumber \\
& \leq \limsup_{T \rightarrow +\infty} \frac{1}{T} \E \int^T_0
\braket{ D_x H(X^*_t, u^*_t, p^{*}_t, q^{*}_t), X^*_t - X_t } dt, 
\end{align*}
where in the last step we have used the minimality condition \eqref{eq:suf minimality}.
Next,
\begin{equation}
\lim_{T \to \infty} \frac{1}{T} \E \braket{p^*_T,X^*_T - X_T} = 0,
\end{equation}
due to the fact that $p^*,X,X^* \in L^{\infty}\lp \R_+; L^2(\Omega;H) \rp $. 

\noindent By applying the It\^o formula to $\braket{p^{*,T}_T, X^*_T - X_T}_H$ and putting all the terms together we arrive at
\begin{equation}
J(u(\cdot)) - J(u^*(\cdot)) \leq 0.
\end{equation}
The above inequality means that $u^*(\cdot)$ is optimal control.
\end{proof}

 \noindent The form of minimality condition \eqref{eq:suf minimality} is related to our definition of the Hamiltonian. In fact, one could introduce an another sign convention for $H$, namely $H(x,u,p,q) = \braket{b(t,x,u),p} + \sum_{i=1}^d \braket{\sigma^i(x,u),q^i} - f(x,u)$ which would lead to the corresponding modification in the driver of the first adjoint equation, concavity assumption (instead of convexity) on $H$ in $(x,u)$ and the opposite inequality in \eqref{eq:suf minimality}. All these changes would lead to the maximality condition usually considered with stochastic maximum principle. \smallskip

\bibliography{mybib}
\bibliographystyle{plain}

\end{document}